\newcommand{\N}{\mathbb{N}}
\newcommand{\R}{\mathbb{R}}
\newcommand{\Div}{\mathrm{div} \, }
\newcommand{\dx}{\, {\rm d} x}
\newcommand{\dt}{\, {\rm d} t}
\newcommand{\ds}{\, {\rm d} s}
\renewcommand{\epsilon}{\varepsilon}
\renewcommand{\phi}{\varphi}
\newtheorem{lemma}{Lemma}[section]
\newtheorem{thm}[lemma]{Theorem}
\theoremstyle{definition}
\newtheorem{defi}[lemma]{Definition}
\newtheorem{rmk}[lemma]{Remark}
\newtheorem{ex}[lemma]{Example}
\numberwithin{equation}{section}
\begin{document}

\title[Singular $\Phi$-Laplacian problems]{Existence of two solutions for \\ singular $\Phi$-Laplacian problems}
\author[P. Candito]{Pasquale Candito}
\address[P. Candito]{Department DICEAM, University of Reggio Calabria, Via Graziella (Feo Di Vito), 89122 Reggio Calabria, Italy}
\email{pasquale.candito@unirc.it}
\author[U. Guarnotta]{Umberto Guarnotta}
\address[U. Guarnotta]{Dipartimento di Matematica e Informatica, Universit\`a degli Studi di Palermo, Via Archirafi 34, 90123 Palermo, Italy}
\email{umberto.guarnotta@unipa.it}
\author[R. Livrea]{Roberto Livrea}
\address[R. Livrea]{Dipartimento di Matematica e Informatica, Universit\`a degli Studi di Palermo, Via Archirafi 34, 90123 Palermo, Italy}
\email{roberto.livrea@unipa.it}

\maketitle

\begin{abstract}
Existence of two solutions to a parametric singular quasi-linear elliptic problem is proved. The equation is driven by the $\Phi$-Laplacian operator and the reaction term can be non-monotone. The main tools employed are a local minimum theorem and the Mountain Pass theorem, together with the truncation technique. Global $C^{1,\tau}$ regularity of solutions is also investigated, chiefly via a priori estimates and perturbation techniques.
\end{abstract}

{
\let\thefootnote\relax
\footnote{{\bf{MSC 2020}}: 35J20, 35J25, 35J62.}
\footnote{{\bf{Keywords}}: Phi-Laplacian, Sobolev-Orlicz spaces, singular terms, variational methods.}
\footnote{\Letter \quad Corresponding author: Umberto Guarnotta (umberto.guarnotta@unipa.it).}
}
\setcounter{footnote}{0}


\section{Introduction and main results}

In this paper we consider the problem
\begin{equation}
\label{lambdaprob}
\tag{${\rm P}_{\lambda,f}$}
\left\{
\begin{alignedat}{2}
-\Delta_\Phi u &= \lambda f(x,u) \quad &&\mbox{in} \;\; \Omega, \\
u &> 0 \quad &&\mbox{in} \;\; \Omega, \\
u &= 0 \quad &&\mbox{on} \;\; \partial \Omega, 
\end{alignedat}
\right.
\end{equation}
where $ \Omega \subseteq \R^N $, $ N \geq 2 $, is a bounded domain with smooth boundary $\partial \Omega$, $\lambda>0$ is a parameter, $ f: \Omega \times (0,+\infty) \to [0,+\infty) $ is a Carathéodory function, and $\Delta_\Phi$ is the $\Phi$-Laplacian, namely,
\begin{equation}
\label{Uhlenbeck}
\Delta_\Phi u := \Div (a(|\nabla u|)\nabla u)
\end{equation}
for a suitable $C^1$ function $a:(0,+\infty) \to (0,+\infty)$. Setting $\phi(t) = ta(t)$ for all $t>0$, we denote by $\Phi$ the primitive of $\phi$ satisfying $\Phi(0)=0$. With the hypotheses below (cf. also \cite[Appendix I]{GG}), $\Phi$ turns out to be the Young function generated by $\phi$ (see \cite[Definition 3.2.1]{KJF}).

\begin{defi}
\label{soldef}
We say that $u \in W^{1,\Phi}_0(\Omega)$ is a (weak) solution to \eqref{lambdaprob} if $u>0$ in $\Omega$ and, for any $v \in W^{1,\Phi}_0(\Omega)$, one has both $f(\cdot,u)v \in L^1(\Omega)$ and
\begin{equation*}
\int_\Omega a(|\nabla u(x)|) \nabla u(x) \cdot \nabla v(x) \dx = \lambda \int_\Omega f(x,u(x)) v(x) \dx.
\end{equation*}
\end{defi}

We assume the following hypotheses (the indices $i_\Psi,s_\Psi$ are defined in \eqref{indices} below):
\begin{itemize}
\item[\underline{${\rm H(a)_1}$}:]
\begin{equation}
\label{ellipt}
-1 < \inf_{t>0} \frac{ta'(t)}{a(t)} \leq \sup_{t>0} \frac{ta'(t)}{a(t)} < +\infty.
\end{equation}
\item[\underline{${\rm H(a)_2}$}:] We suppose that
\begin{equation}
\label{plessN}
\int_1^{+\infty} \Theta_\Phi(t) \dt = +\infty, \quad \mbox{where} \;\; \Theta_\Phi(t) := \frac{\Phi^{-1}(t)}{t^{1+\frac{1}{N}}}.
\end{equation}
Accordingly, the Sobolev-Orlicz conjugate $\Phi_*$ is well defined; see Definition \ref{psistardef}. We also suppose $s_\Phi<i_{\Phi_*}$.
\item[\underline{${\rm H(f)_1}$}:]One has
\begin{equation}
\label{fbelow}
\liminf_{s \to 0^+} f(x,s) = +\infty \quad \mbox{uniformly w.r.t.} \;\; x \in \Omega.
\end{equation}
\item[\underline{${\rm H(f)_2}$}:] There exist $ c_i > 0 $, $ i=1,2 $, $ \gamma \in (0,1) $, and a Young function $\Upsilon$ such that $1<i_\Upsilon \leq s_\Upsilon < i_{\Phi_*}$ and
\begin{equation}
\label{fgrowth}
f(x,s) \leq c_1 \overline{\Upsilon}^{-1}(\Upsilon(s)) + c_2 s^{-\gamma}
\end{equation}
for almost all $x \in \Omega$ and all $s>0$, where $\overline{\Upsilon}$ is the Young conjugate of $\Upsilon$ (see Definition \ref{conj}).
\item[\underline{${\rm H(f)_3}$}:] There exist $R>0$ and $\mu>s_\Phi$ such that
\begin{equation}
\label{AR+}
\mu F(x,t) \leq tf(x,t), \quad \mbox{being} \;\; F(x,t) := \int_R^t f(x,s) \ds,
\end{equation}
for almost all $x\in\Omega$ and all $t \geq R$.
\end{itemize}
\begin{rmk}
Let us briefly comment the main assumptions we have done.
\begin{itemize}
\item Hypothesis ${\rm H(a)_1}$ is called \textit{ellipticity condition} for operators with Uhlenbeck structure, namely, in the form \eqref{Uhlenbeck}. It implies $1<i_\Phi \leq s_\Phi <+\infty$, which in turn implies $ \Phi \in \Delta_2 \cap \nabla_2 $ (see \eqref{deltanabla} below).
\item The first part of ${\rm H(a)_2}$ is the Sobolev-Orlicz analogue of the Sobolev hypothesis $ p < N $. As customary, ${\rm H(a)_2}$ forces the problem in the worst regularity setting, because of the lack of the Morrey-type embedding $ W^{1,\Phi}_0(\Omega) \hookrightarrow C^{0,\tau(\cdot)}(\Omega)$ (see \cite[Theorem 7.4.4]{KJF} for a complete statement). Regarding the second part of ${\rm H(a)_2}$, the condition $s_\Phi<i_{\Phi_*}$ is used only in Section 3.
\item The requirement $s_\Upsilon<i_{\Phi_*}$ in ${\rm H(f)_2}$ is made for the sake of simplicity: indeed, it implies $\Upsilon \ll \Phi_*$ (see \eqref{chainimpl}), which in turn guarantees the compactness of the embedding $W^{1,\Phi}_0(\Omega) \hookrightarrow L^\Upsilon(\Omega)$. Actually, in Section 3 it suffices to require $s_\Upsilon \leq i_{\Phi_*}$. In this respect, see also Remark \ref{noregularity} in the appendix.
\item Condition \eqref{fgrowth} parallels the sub-critical growth condition used in the standard Sobolev setting. We recall that $\overline{\Upsilon}^{-1}(\Upsilon(s))$ can be replaced with $\frac{\Upsilon(s)}{s}$, according to the inequalities
\begin{equation}
\label{equivalent}
\Psi(s) \leq s\overline{\Psi}^{-1}(\Psi(s)) \leq 2\Psi(s) \quad \forall s>0,
\end{equation}
valid for any Young function $\Psi$. For a proof of \eqref{equivalent}, vide \cite[Proposition 2.1.1]{RR}.
\item Hypothesis ${\rm H(f)_3}$ is an adaptation of the \textit{Ambrosetti-Rabinowitz unilateral condition} (see, e.g., \cite[p.154]{PapSmy}) in the Sobolev-Orlicz setting. Following \cite{CGHMS}, ${\rm H(f)_3}$ can be weakened by requiring, instead of $\mu>s_\Phi$,
\begin{equation*}
\mu > \limsup_{t\to+\infty} \frac{t\phi(t)}{\Phi(t)}.
\end{equation*}
\end{itemize}
\end{rmk}

The primary aim of the present work is to extend the results of \cite{CGP,H} to problems driven by non-homogeneous operators as the $(p,q)$-Laplacian $\Delta_p+\Delta_q$, being $\Delta_r u := |\nabla u|^{r-2}\nabla u$ the classical $r$-Laplacian, $r \in (1,+\infty)$. A class of operators encompassing the $(p,q)$-Laplacian is the one described in \cite[Appendix I]{GG}, where ellipticity and Uhlenbeck structure are coupled with a $p$-growth condition that allows to work in the Sobolev setting $W^{1,p}_0(\Omega)$. This class can be extended further, up to the $\Phi$-Laplacian operator, for which regularity theory and maximum principles are still available (see \cite{L,PS}). Existence and regularity results for problems involving the $\Phi$-Laplacian can be found, e.g., in \cite{CGHMS,FIN,TF,CGS}. Dealing with $\Phi$-Laplacian problems requires the usage of Sobolev-Orlicz spaces, since $\Phi$ may have non-standard growth; this fact is discussed in the appendix, where a class of explicit examples is furnished. An introductory exposition about Orlicz and Sobolev-Orlicz spaces is provided \cite[Chapters 3 and 7]{KJF}; we also address to the monographs \cite{KR,RR}. The relation between Sobolev-Orlicz spaces and PDEs is the subject of \cite{G}.

Also singular $\Phi$-Laplacian problems have been studied during the last years. The model case $f(x,u)=a(x)u^{-\gamma}$, with $a \geq 0$ and $\gamma>0$, was investigated in \cite{SGC}. A more general problem, including also convection terms (that is, $f$ depends also on $\nabla u$), was studied in \cite{CGSS}. Due to the lack of variational setting, primarily caused by the strongly singular term (i.e., $\gamma>1$), both works make use of a generalized Galerkin method to get a solution. We are not aware of other existence results pertaining singular $\Phi$-Laplacian problems.

In spite of the papers mentioned above, our approach is variational: first we construct a sub-solution $\underline{u}$ (Lemma \ref{subsollemma}) and truncate $f$ at the level of $\underline{u}$; then we consider the truncated problem \eqref{lambdaauxprob}, which is equivalent to \eqref{lambdaprob} (cf. Lemma \ref{equivalence}), and find a solution by means of the local minimum theorem reported in Theorem \ref{varprinc}. In order to get a second solution, we use the Mountain Pass theorem, jointly with the Ambrosetti-Rabinowitz unilateral condition (see ${\rm H(f)_3}$ above), which implies the Palais-Smale condition (vide Lemma \ref{palaissmale}). We highlight that the Ambrosetti-Rabinowitz condition has been used in the context of $\Phi$-Laplacian problems also in \cite{CGHMS,TF}, while \cite{FIN} uses the Mountain Pass theorem without the Palais-Smale condition. Here we highlight the fact we find the first solution without using the $W^{1,\Phi}$ versus $C^1$ local minimizer technique.

It is worth noticing that the aforementioned works \cite{SGC,CGSS}, that make no use of the Mountain Pass theorem, consider reaction terms with growth not faster than $\Phi$ (usually called `linear'); on the contrary we treat, with the same technique, both linear forcing terms and super-linear ones (but `sub-critical', i.e., growing slower than $\Phi_*$). This is remarkable in our context since, in a variational setting, linear problems possess coercive energy functionals, allowing to find a solution via the Weierstrass-Tonelli theorem instead of the Mountain Pass one. To the best of our knowledge, this is the first work treating singular $\Phi$-Laplacian problems with this technique, that offers a unified approach to the coercive and the non-coercive cases.

Regularity of solutions is investigated in Section 3. $L^\infty$ estimates are provided in Lemma \ref{moser} by using a technique introduced by De Giorgi; see, e.g., \cite[Lemma 2.5.4]{LU}. Then $C^{1,\tau}$ regularity is obtained in Theorem \ref{holder} via the perturbation technique developed by Campanato \cite{Ca1,Ca2}, Giaquinta and Giusti \cite{GiaGiu}, combined with a result pertaining solutions to singular semi-linear elliptic problems that traces back to \cite{GST} (see also \cite{H}).

In the appendix we discuss about the importance of using Sobolev-Orlicz spaces, providing also two examples of problems fulfilling ${\rm H(a)_1}$--${\rm H(a)_2}$ and ${\rm H(f)_1}$--${\rm H(f)_3}$.

\section{Preliminaries}

We denote by $d(x)$ the distance of $x \in \Omega$ from $\partial \Omega$, while $d_\Omega$ stands for the diameter of $\Omega$. Given any function $u: \Omega \to \R$ and any number $\rho \in \R$, $\{u<\rho\}$ stands for the set $\{x \in \Omega: u(x)<\rho\}$, and the same holds for $\{u \geq \rho\}$, $\{u = \rho\}$, etc. \\
To avoid unnecessary technicalities, hereafter we use `for all $x\in\Omega$' instead of `for almost all $x\in\Omega$' when no confusion arises.

\begin{defi}
A function $\Psi:[0,+\infty) \to [0,+\infty)$ is said to be a Young function\footnote{Some textbooks, as \cite{AF}, use the notion of N-function; here we adopt the nomenclature used in \cite{KJF}. See \cite[Section 8.1]{AF} and \cite[Remark 3.2.7]{KJF} for further details.} if it is continuous, strictly increasing, convex, and the following holds true:
\begin{equation}
\label{Nfunct}
\lim_{t \to 0^+} \frac{\Psi(t)}{t} = 0, \quad \lim_{t \to +\infty} \frac{\Psi(t)}{t} = +\infty.
\end{equation}
\end{defi}

\begin{defi}
\label{conj}
Let $\Psi$ be a Young function. We denote by $\overline{\Psi}$ the Young conjugate of $\Psi$, defined via Legendre transformation as
\begin{equation*}
\overline{\Psi}(t) := \max_{s \geq 0} \{st-\Psi(s)\} \quad \forall t \geq 0.
\end{equation*}
\end{defi}

\begin{defi}
\label{psistardef}
Let $\Psi$ be a Young function satisfying \eqref{plessN} with $\Psi$ in place of $\Phi$. Suppose also, without loss of generality (cf. \cite[Exercise 7.2.2]{KJF}), that
\begin{equation*}
\int_0^1 \Theta_{\Psi}(s) \ds < +\infty.
\end{equation*}
The Sobolev-Orlicz conjugate of $\Psi$, indicated as $\Psi_*$, is defined via its inverse as
\begin{equation*}
\Psi_*^{-1}(t) := \int_0^t \Theta_\Psi(s) \ds.
\end{equation*}
\end{defi}

\begin{defi}
Let $\Psi$ be a Young function. We write $\Psi \in \Delta_2$ if there exist $k,T>0$ such that
\begin{equation*}
\Psi(2t) \leq k\Psi(t) \quad \forall t \geq T.
\end{equation*}
%
%
We write $\Psi \in \nabla_2$ if there exist $\eta>1$ and $T>0$ such that
\begin{equation*}
\Psi(t) \leq \frac{1}{2\eta}\Psi(\eta t) \quad \forall t \geq T.
\end{equation*}
\end{defi}

Equivalent statements are collected in \cite[Theorem 2.3.3 and Corollary 2.3.4]{RR}; here we only mention
\begin{equation}
\label{deltanabla}
\begin{split}
\Psi \in \Delta_2 \quad &\Leftrightarrow \quad \overline{\Psi} \in \nabla_2 \quad \Leftrightarrow \quad \limsup_{t\to+\infty} \frac{t\Psi'(t)}{\Psi(t)} < +\infty, \\
\Psi \in \nabla_2 \quad &\Leftrightarrow \quad \overline{\Psi} \in \Delta_2 \quad \Leftrightarrow \quad \liminf_{t\to+\infty} \frac{t\Psi'(t)}{\Psi(t)} > 1.
\end{split}
\end{equation}
For a comparison with power-law functions, see \cite[Corollary 2.3.5]{RR}.

Let $\Psi \in \Delta_2$. We endow the Orlicz space\footnote{Since $\Psi \in \Delta_2$, we make no distinction between \textit{Orlicz space} and \textit{Orlicz class}; see \cite[Theorem 3.7.3]{KJF}.}
\begin{equation*}
L^\Psi(\Omega) := \left\{ u:\Omega \to \R \; \mbox{measurable}: \, \int_\Omega \Psi(|u(x)|) \dx < +\infty \right\}
\end{equation*}
with the Luxembourg norm
\begin{equation*}
\|u\|_{L^\Psi(\Omega)} := \inf \left\{ \lambda>0: \, \int_\Omega \Psi\left(\frac{|u(x)|}{\lambda}\right) \dx \leq 1 \right\}.
\end{equation*}

Suppose that
\begin{equation}
\label{indices}
1 < i_\Psi := \inf_{t>0} \frac{t\Psi'(t)}{\Psi(t)} \leq \sup_{t>0} \frac{t\Psi'(t)}{\Psi(t)} =: s_\Psi < +\infty,
\end{equation}
which implies $\Psi \in \Delta_2 \cap \nabla_2$ by \eqref{deltanabla}. We define the functions $\underline{\zeta}_\Psi,\overline{\zeta}_\Psi:[0,+\infty) \to [0,+\infty)$ as
\begin{equation*}
\underline{\zeta}_\Psi(t) := \min\{t^{i_\Psi},t^{s_\Psi}\}, \quad \overline{\zeta}_\Psi(t) := \max\{t^{i_\Psi},t^{s_\Psi}\}.
\end{equation*}
One has (cf. \cite[Lemma 2.1]{FIN})
\begin{equation}
\label{factor}
\underline{\zeta}_\Psi(k) \Psi(t) \leq \Psi(kt) \leq \overline{\zeta}_\Psi(k) \Psi(t) \quad \forall k,t \geq 0
\end{equation}
and
\begin{equation}
\label{comparison}
\begin{split}
&\underline{\zeta}_\Psi(\|w\|_{L^\Psi(\Omega)}) \leq \int_\Omega \Psi(|w(x)|) \dx \leq \overline{\zeta}_\Psi(\|w\|_{L^\Psi(\Omega)})
\end{split}
\end{equation}
for all $w \in L^\Psi(\Omega)$. We also recall (see \cite[Lemmas 2.4-2.5]{FIN}) that
\begin{equation}
\label{youngind}
s_\Psi' \leq i_{\overline{\Psi}} \leq s_{\overline{\Psi}} \leq i_\Psi'
\end{equation}
and, provided $s_\Psi<N$,
\begin{equation}
\label{sobind}
i_\Psi^* \leq i_{\Psi_*} \leq s_{\Psi_*} \leq s_\Psi^*,
\end{equation}
being $r' := \frac{r}{r-1}$ and $r^* := \frac{Nr}{N-r}$ respectively the Young and the Sobolev conjugates of $r$.

\begin{defi}
Let $\Psi_1,\Psi_2$ be two Young functions. We write $\Psi_1 < \Psi_2$ if there exist $c,T>0$ such that
\begin{equation}
\label{Youngcomp}
\Psi_1(t) \leq \Psi_2(ct) \quad \forall t \geq T.
\end{equation}
We write $\Psi_1 \ll \Psi_2$ if for any $c>0$ there exists $T=T(c)>0$ such that \eqref{Youngcomp} holds true. Equivalently,
\begin{equation*}
\lim_{t \to +\infty} \frac{\Psi_1(t)}{\Psi_2(\eta t)} = 0 \quad \forall \eta > 0.
\end{equation*}
\end{defi}

Further characterizations can be found in \cite[Theorem 2.2.2]{RR}. It is worth recalling the following chain of (non-reversible) implications:
\begin{equation}
\label{chainimpl}
s_{\Psi_1} < i_{\Psi_2} \quad \Rightarrow \quad \Psi_1 \ll \Psi_2 \quad \Rightarrow \quad \Psi_1 < \Psi_2.
\end{equation}

We consider the Sobolev-Orlicz space
\begin{equation*}
W^{1,\Phi}(\Omega) := \{ u \in L^\Phi(\Omega): \, |\nabla u| \in L^\Phi(\Omega)\},
\end{equation*}
equipped with the norm $\|u\|_{W^{1,\Phi}(\Omega)} := \|u\|_{L^\Phi(\Omega)} + \|\nabla u\|_{L^\Phi(\Omega)}$, and its subspace $W^{1,\Phi}_0(\Omega)$, which is the closure of $C^\infty_c(\Omega)$ under $\|\cdot\|_{W^{1,\Phi}(\Omega)}$. According to the Poincaré inequality (see, e.g., \cite[p.8]{CGSS}), we are allowed to endow $W^{1,\Phi}_0(\Omega)$ with the norm
\begin{equation*}
\|u\|_{W^{1,\Phi}_0(\Omega)} := \|\nabla u\|_{L^\Phi(\Omega)}.
\end{equation*}
Since $\Phi \in \Delta_2 \cap \nabla_2$, the space $W^{1,\Phi}_0(\Omega)$ is separable and reflexive (cf. \cite[Theorem 8.31]{AF}). Its dual space will be denoted by $W^{-1,\overline{\Phi}}(\Omega)$, while $\langle\cdot,\cdot\rangle$ represent the duality brackets between $W^{-1,\overline{\Phi}}(\Omega)$ and $W^{1,\Phi}_0(\Omega)$. We recall that $ W^{1,\Phi}_0(\Omega) \hookrightarrow L^{\Phi_*}(\Omega)$ continuously and $W^{1,\Phi}_0(\Omega) \hookrightarrow L^\Upsilon(\Omega)$ compactly for all $\Upsilon \ll \Phi_*$; see \cite[Theorems 7.2.3 and 7.4.4]{KJF}.

Although the next result is folklore, we briefly sketch its proof for the sake of completeness.

\begin{lemma}
\label{princprop}
Under ${\rm H(a)_1}$, the operator $ -\Delta_\Phi: W^{1,\Phi}_0(\Omega) \to W^{-1,\overline{\Phi}}(\Omega) $ defined as
\begin{equation*}
\langle -\Delta_\Phi u,v \rangle := \int_\Omega a(|\nabla u|)\nabla u \cdot \nabla v \dx \quad \forall u,v \in W^{1,\Phi}_0(\Omega)
\end{equation*}
is well defined, bounded, continuous, coercive, strictly monotone, and of type ${\rm (S_+)}$. Moreover, the functional $ H: W^{1,\Phi}_0(\Omega) \to \R $ defined as
\begin{equation}
\label{Hdef}
H(u) := \int_\Omega \Phi(|\nabla u|) \dx
\end{equation}
is convex, weakly lower semi-continuous, and of class $ C^1 $, with $ H' = -\Delta_\Phi $ in $ W^{-1,\overline{\Phi}}(\Omega) $.
\end{lemma}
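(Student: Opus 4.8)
The plan is to verify each listed property of the operator $-\Delta_\Phi$ and of the functional $H$ in turn, using ${\rm H(a)_1}$ together with the structural inequalities \eqref{factor}, \eqref{comparison}, \eqref{youngind} recorded above. First I would unpack the ellipticity condition: setting $g(t) := \inf_{t>0} \frac{ta'(t)}{a(t)}$ and $G := \sup_{t>0} \frac{ta'(t)}{a(t)}$, the relation $\phi(t)=ta(t)$ gives $\frac{t\phi'(t)}{\phi(t)} = 1 + \frac{ta'(t)}{a(t)}$, so ${\rm H(a)_1}$ reads $0 < 1+g \leq \frac{t\phi'(t)}{\phi(t)} \leq 1+G < +\infty$ with $1+g>0$; integrating $\frac{\phi'}{\phi}$ and then using $\Phi' = \phi$ one deduces $1 \leq i_\Phi \leq s_\Phi < +\infty$, whence (after checking $i_\Phi>1$, which follows because $1+g>0$ forces the sharp inequality once one integrates $\frac{t\Phi'(t)}{\Phi(t)}$ — note $\Phi(t)=\int_0^t\phi \geq \frac{t\phi(t)}{s_\Phi}$ and $\leq \frac{t\phi(t)}{i_\Phi}$, giving $i_\Phi = 1+i_a$-type bounds) $\Phi \in \Delta_2 \cap \nabla_2$ via \eqref{deltanabla}. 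This places us in the setting of \eqref{factor}–\eqref{comparison}, which is the workhorse for the estimates.

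Next I would treat $H$. Convexity of $\Phi$ (it is a Young function) plus convexity of $t \mapsto |t|$ gives convexity of $H$; weak lower semicontinuity then follows from convexity and strong lower semicontinuity (Mazur's lemma), the latter being a consequence of Fatou applied along a subsequence converging a.e. in gradient. For the $C^1$ property and the identity $H' = -\Delta_\Phi$, I would compute the Gâteaux derivative directly: for $u,v \in W^{1,\Phi}_0(\Omega)$ the difference quotient $\frac{1}{\tau}(\Phi(|\nabla u + \tau \nabla v|) - \Phi(|\nabla u|))$ converges pointwise to $a(|\nabla u|)\nabla u \cdot \nabla v$, and is dominated (using $\Phi' = \phi$, the mean value theorem, $\Delta_2$, and Young's inequality in the Orlicz form $st \leq \overline\Phi(s) + \Phi(t)$ together with $\phi(|\nabla u|) = |\nabla u|a(|\nabla u|)$ belonging to $L^{\overline\Phi}(\Omega)$, which itself follows from $s_{\overline\Phi} < +\infty$ and \eqref{youngind}) by an $L^1$ function uniformly for small $\tau$; dominated convergence yields Gâteaux differentiability with the claimed derivative. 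Continuity of $u \mapsto -\Delta_\Phi u$ from $W^{1,\Phi}_0(\Omega)$ to $W^{-1,\overline\Phi}(\Omega)$ then upgrades Gâteaux to Fréchet, giving $H \in C^1$.

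For the operator-theoretic properties of $-\Delta_\Phi$ itself: well-definedness and boundedness come from Hölder's inequality in Orlicz spaces, $|\langle -\Delta_\Phi u, v\rangle| \leq 2\|\phi(|\nabla u|)\|_{L^{\overline\Phi}(\Omega)}\|\nabla v\|_{L^\Phi(\Omega)}$, with the first factor controlled by $\overline\zeta_{\overline\Phi}$ of $\|\nabla u\|_{L^\Phi(\Omega)}$ via \eqref{comparison} after noting $\overline\Phi(\phi(t)) \leq s_\Phi \Phi(t)$ (a standard Young-conjugate identity). Coercivity follows from $\langle -\Delta_\Phi u, u\rangle = \int_\Omega \phi(|\nabla u|)|\nabla u|\dx \geq i_\Phi \int_\Omega \Phi(|\nabla u|)\dx \geq i_\Phi \underline\zeta_\Phi(\|u\|)$, which tends to $+\infty$ faster than $\|u\|$ since $i_\Phi>1$. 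Strict monotonicity is the elementary vector inequality $(a(|\xi|)\xi - a(|\eta|)\eta)\cdot(\xi-\eta) > 0$ for $\xi \neq \eta$, which holds because ${\rm H(a)_1}$ (specifically $1+g>0$) makes $\xi \mapsto a(|\xi|)\xi$ the gradient of the strictly convex function $\xi \mapsto \Phi(|\xi|)$; integrating gives $\langle -\Delta_\Phi u - (-\Delta_\Phi w), u - w\rangle > 0$ whenever $u \neq w$. For the ${\rm (S_+)}$ property — this is the step I expect to be the main obstacle, as it is the only genuinely non-formal one — I would argue by the now-standard Browder–Minty-type scheme adapted to Orlicz spaces: given $u_n \rightharpoonup u$ with $\limsup_n \langle -\Delta_\Phi u_n, u_n - u\rangle \leq 0$, combine this with monotonicity to get $\langle -\Delta_\Phi u_n - (-\Delta_\Phi u), u_n - u\rangle \to 0$, extract from the nonnegative integrands (using the pointwise inequality $(a(|\xi|)\xi - a(|\eta|)\eta)\cdot(\xi - \eta) \geq 0$) a.e. convergence $\nabla u_n \to \nabla u$ along a subsequence (the delicate point: one needs a quantitative lower bound on the integrand that forces $|\nabla u_n - \nabla u| \to 0$ a.e., obtained by case analysis on whether $|\nabla u_n|$ stays bounded, exploiting strict monotonicity of $t \mapsto \phi(t)$ and the superlinear growth from $i_\Phi>1$), then invoke \eqref{factor}–\eqref{comparison} and a Vitali/Brezis–Lieb argument to promote a.e. gradient convergence to norm convergence $\|u_n - u\|_{W^{1,\Phi}_0(\Omega)} \to 0$. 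A subsequence-subsequence argument then gives convergence of the full sequence, completing the proof. Since the statement is described as folklore, I would keep the ${\rm (S_+)}$ verification terse, citing the analogous arguments in \cite{FIN,CGHMS} for the Orlicz-space technicalities.
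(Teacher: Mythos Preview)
Your proposal is correct and follows essentially the same route as the paper: verify each property in turn using H\"older's inequality in Orlicz spaces for well-definedness/boundedness, the inequality $\int \phi(|\nabla u|)|\nabla u|\dx \geq i_\Phi \underline{\zeta}_\Phi(\|u\|)$ for coercivity, convexity of $\Phi$ for the properties of $H$, and defer the ${\rm (S_+)}$ verification to the literature. The only cosmetic difference is that the paper outsources continuity, strict monotonicity, ${\rm (S_+)}$, and the $C^1$ property entirely to citations (\cite{SGC}, \cite{CGS}, \cite{FIN}), whereas you sketch the arguments; your sketches are sound (in particular your observation that ${\rm H(a)_1}$ gives $\phi'>0$, hence strict convexity of $\xi\mapsto\Phi(|\xi|)$, is exactly what underlies the cited strict-monotonicity result), and your estimate $\overline{\Phi}(\phi(t))\leq s_\Phi\Phi(t)$ is the same as the paper's use of \eqref{equivalent} up to an immaterial constant.
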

\begin{proof}
According to the H\"older inequality (see \cite[p.62]{G}) we get
\begin{equation}
\label{welldef}
\begin{split}
|\langle -\Delta_\Phi u,v \rangle| &\leq \int_\Omega \phi(|\nabla u|) |\nabla v| \dx \leq s_\Phi \int_\Omega \frac{\Phi(|\nabla u|)}{|\nabla u|} |\nabla v| \dx \\
&\leq s_\Phi \left\| \frac{\Phi(|\nabla u|)}{|\nabla u|} \right\|_{L^{\overline{\Phi}}(\Omega)} \|\nabla v\|_{L^\Phi(\Omega)} = s_\Phi \left\| \frac{\Phi(|\nabla u|)}{|\nabla u|} \right\|_{L^{\overline{\Phi}}(\Omega)} \|v\|_{W^{1,\Phi}_0(\Omega)}.
\end{split}
\end{equation}
Exploiting \eqref{equivalent} we infer
\begin{equation}
\label{welldef2}
\int_\Omega \overline{\Phi}\left( \frac{\Phi(|\nabla u|)}{|\nabla u|} \right) \dx \leq \int_\Omega \Phi(|\nabla u|) \dx < +\infty.
\end{equation}
By \eqref{welldef}--\eqref{welldef2} we deduce that $-\Delta_\Phi$ is well defined. Boundedness and continuity follow from \eqref{welldef2} and \cite[Lemma 7.3]{SGC}.

In order to prove the coercivity of $-\Delta_\Phi$, we exploit \eqref{comparison} to obtain
\begin{equation}
\label{coerc}
\begin{split}
\int_\Omega a(|\nabla u|) |\nabla u|^2 \dx &= \int_\Omega \phi(|\nabla u|) |\nabla u| \dx \geq i_\Phi \int_\Omega \Phi(|\nabla u|) \dx \\
&\geq i_\Phi \underline{\zeta}_\Phi(\|\nabla u\|_{L^\Phi(\Omega)})
\end{split}
\end{equation}
for all $u \in W^{1,\Phi}_0(\Omega)$. Hence
\begin{equation*}
\frac{\langle -\Delta_\Phi u,u \rangle}{\|u\|_{W^{1,\Phi}_0(\Omega)}} \geq i_\Phi \frac{\underline{\zeta}_\Phi(\|u\|_{W^{1,\Phi}_0(\Omega)})}{\|u\|_{W^{1,\Phi}_0(\Omega)}} \to +\infty \quad \mbox{as} \;\; \|u\|_{W^{1,\Phi}_0(\Omega)} \to +\infty.
\end{equation*}

The strict monotonicity and the ${\rm (S_+)}$ property of $-\Delta_\Phi$ are guaranteed by \cite[Propositions A.2-A.3]{CGS}.

Convexity of $ H $ directly follows from convexity of $ \Phi $, while Lebesgue's dominated convergence theorem and \cite[Lemma 7.3]{SGC} ensure that $ H $ is continuous. As a consequence, $ H $ is weakly lower semi-continuous. The fact that $ H $ is of class $ C^1 $ has been proved in \cite[Lemma A.3]{FIN}.
\end{proof}

Firstly we consider the problem
\begin{equation}
\label{prob}
\tag{${\rm P}_{1,f}$}
\left\{
\begin{alignedat}{2}
-\Delta_\Phi u &= f(x,u) \quad &&\mbox{in} \;\; \Omega, \\
u &> 0 \quad &&\mbox{in} \;\; \Omega, \\
u &= 0 \quad &&\mbox{on} \;\; \partial \Omega.
\end{alignedat}
\right.
\end{equation}

\begin{lemma}
\label{subsollemma}
Suppose ${\rm H(a)_1}$ and ${\rm H(f)_1}$. Then problem \eqref{prob} admits a sub-solution $ \underline{u} \in C^{1,\tau}_0(\overline{\Omega}) $, with $ \tau \in (0,1] $ opportune, satisfying
\begin{equation}
\label{subsolprops}
k_1 d(x) \leq \underline{u}(x) \leq k_2 d(x) \quad \forall x \in \Omega
\end{equation}
for suitable $ k_1,k_2 > 0 $.
\end{lemma}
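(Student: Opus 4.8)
The plan is to construct $\underline{u}$ as the (unique) solution of an auxiliary $\Phi$-Laplacian problem with a constant right-hand side that is small enough to be dominated by $f$ near $u=0$. First I would use ${\rm H(f)_1}$: since $\liminf_{s\to 0^+} f(x,s) = +\infty$ uniformly in $x$, there exist $\delta>0$ and $m>0$ such that $f(x,s) \geq m$ for all $x \in \Omega$ and all $s \in (0,\delta]$. The idea is then to solve
\begin{equation*}
\left\{
\begin{alignedat}{2}
-\Delta_\Phi w &= m \quad &&\mbox{in} \;\; \Omega, \\
w &= 0 \quad &&\mbox{on} \;\; \partial \Omega,
\end{alignedat}
\right.
\end{equation*}
and, after a suitable rescaling, take $\underline{u} = \epsilon w$ for $\epsilon \in (0,1]$ chosen so that $\|\underline{u}\|_\infty \leq \delta$. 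Existence and uniqueness of $w \in W^{1,\Phi}_0(\Omega)$ follow from Lemma \ref{princprop}: $-\Delta_\Phi$ is bounded, continuous, coercive, and strictly monotone, hence a homeomorphism onto $W^{-1,\overline{\Phi}}(\Omega)$ by the Minty--Browder theorem (the constant functional $v \mapsto m\int_\Omega v\dx$ belongs to $W^{-1,\overline{\Phi}}(\Omega)$ because $W^{1,\Phi}_0(\Omega) \hookrightarrow L^1(\Omega)$). Positivity of $w$ in $\Omega$ comes from the weak maximum principle for the $\Phi$-Laplacian (see \cite{PS}), since the right-hand side is nonnegative and nontrivial.

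Next I would invoke the global regularity theory for the $\Phi$-Laplacian with bounded data, namely the results of Lieberman \cite{L}, to conclude that $w \in C^{1,\tau}_0(\overline{\Omega})$ for some $\tau \in (0,1]$ depending only on the structural constants in ${\rm H(a)_1}$, $N$, and $\Omega$; here the $L^\infty$-bound on $w$ (hence on $m$, which is constant) and the smoothness of $\partial\Omega$ are what is needed. Consequently $\underline{u} = \epsilon w \in C^{1,\tau}_0(\overline{\Omega})$ as well. The upper estimate $\underline{u}(x) \leq k_2 d(x)$ is then immediate: $\underline{u} \in C^1(\overline{\Omega})$ vanishes on $\partial\Omega$, so it is Lipschitz and controlled by the distance function, with $k_2 = \|\nabla \underline{u}\|_\infty$ (up to a constant depending on the geometry of $\Omega$).

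For the lower estimate $\underline{u}(x) \geq k_1 d(x)$ — which I expect to be the main obstacle — I would use the boundary point lemma / Hopf-type estimate for the $\Phi$-Laplacian, again available from \cite{PS} (or via a barrier argument): since $\underline{u} > 0$ in $\Omega$, $\underline{u} \in C^1(\overline{\Omega})$, and $-\Delta_\Phi \underline{u} = \epsilon^{?}\cdot(\text{positive constant}) \geq 0$ in the appropriate weak sense, the outward normal derivative $\partial_\nu \underline{u} < 0$ on $\partial\Omega$; combined with compactness of $\partial\Omega$ and continuity this yields $\underline{u}(x) \geq k_1 d(x)$ near the boundary, and away from the boundary the bound is trivial since $\underline{u}$ is bounded below by a positive constant on any interior compact set. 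One delicate point is that scaling $w \mapsto \epsilon w$ does not interact with $-\Delta_\Phi$ homogeneously (the operator is non-homogeneous), so I would either absorb the scaling into the constant $m$ from the start — i.e. directly solve $-\Delta_\Phi \underline{u} = m$ and then check a posteriori, via the $L^\infty$-estimate of Lemma \ref{moser} or a direct comparison, that $\|\underline{u}\|_\infty$ can be made $\leq \delta$ by taking $m$ small — or work with $-\Delta_\Phi \underline{u} = m$ for the fixed $m$ from ${\rm H(f)_1}$ and rescale $\Omega$ instead. Finally I would verify that $\underline{u}$ is a genuine sub-solution of \eqref{prob}: for every nonnegative $v \in W^{1,\Phi}_0(\Omega)$,
\begin{equation*}
\int_\Omega a(|\nabla \underline{u}|)\nabla \underline{u} \cdot \nabla v \dx = m \int_\Omega v \dx \leq \int_\Omega f(x,\underline{u}) v \dx,
\end{equation*}
where the inequality uses $0 < \underline{u}(x) \leq \delta$ pointwise and $f(x,\underline{u}(x)) \geq m$, together with the fact that $f(\cdot,\underline{u})$ is measurable and the right-hand integral is well defined (possibly $+\infty$, which is harmless for a sub-solution inequality). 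This completes the construction.
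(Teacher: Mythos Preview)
Your proposal is correct and follows essentially the same route as the paper: build $\underline{u}$ as the unique $W^{1,\Phi}_0$-solution of $-\Delta_\Phi \underline{u} = \text{(small constant)}$, upgrade to $C^{1,\tau}_0(\overline{\Omega})$ via Lieberman, obtain the two-sided distance bounds from the Hopf/Boundary Point lemma, and check the sub-solution inequality using $0<\underline{u}\leq\delta$ and ${\rm H(f)_1}$.

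Two remarks on execution. First, your scaling detour $\underline{u}=\epsilon w$ is a dead end (as you note), and your option (b) of rescaling $\Omega$ is not available since $\Omega$ is fixed; drop both and go straight to your option~(a). Second, the one step you leave vague---why the solution of $-\Delta_\Phi \underline{u}=m$ can be made small in $L^\infty$ by shrinking $m$---is exactly where the paper spends its effort, and it does so by a clean compactness argument rather than by Lemma~\ref{moser} or an ad hoc comparison: the paper solves $-\Delta_\Phi u_n = 1/n$, observes that Lieberman gives a \emph{uniform} $C^{1,\tau}_0(\overline{\Omega})$ bound on the whole sequence, applies Ascoli--Arzel\`a to extract a $C^1$-limit, identifies this limit as $0$ by passing to the limit in the weak formulation, and then simply picks $\hat n$ with $\|u_{\hat n}\|_{L^\infty}<\delta$. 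This avoids having to make any quantitative $L^\infty$ estimate explicit. Your ``direct comparison'' idea can be made to work (e.g.\ compare with the radial solution on a large ball), but it requires more computation than the paper's soft argument.
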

\begin{proof}
This proof is patterned after the one of \cite[Lemma 3.5]{GMM}. Hypothesis ${\rm H(f)_1}$ provides $ \delta> 0 $ such that
\begin{equation}
\label{fbelow2}
f(x,s) \geq 1 \quad \mbox{for all} \;\; (x,s) \in \Omega \times (0,\delta).
\end{equation}
For any $n \in \N$, let us consider the following problem:
\begin{equation}
\label{torsion}
\tag{${\rm P}_{1,\frac{1}{n}}$}
\left\{
\begin{alignedat}{2}
-\Delta_\Phi u &= \frac{1}{n} \quad &&\mbox{in} \;\; \Omega, \\
u &= 0 \quad &&\mbox{on} \;\; \partial \Omega.
\end{alignedat}
\right.
\end{equation}
By virtue of Lemma \ref{princprop}, Minty-Browder's theorem \cite[Theorem 5.16]{B} can be applied; thus \eqref{torsion} admits a unique solution $ u_n \in W^{1,\Phi}_0(\Omega) $. Lieberman's nonlinear regularity theory \cite[Theorem 1.7]{L} guarantees that $ \{u_n: \, n \in \N \} $ is bounded in $ C^{1,\tau}_0(\overline{\Omega}) $ for some $ \tau \in (0,1] $. Hence, thanks to the Ascoli-Arzelà theorem and up to subsequences, we get $ u_n \to u $ in $ C^1_0(\overline{\Omega}) $ for some $ u \in C^1_0(\overline{\Omega}) $. Passing to the limit in the weak formulation of \eqref{torsion} reveals that $ u \equiv 0 $ in $ \Omega $. Hence it is possible to choose $ \hat{n} \in \N $ such that
\begin{equation}
\label{subsolbound}
\|u_{\hat{n}}\|_{L^\infty(\Omega)} < \delta.
\end{equation}
Set $ \underline{u} := u_{\hat{n}} $. The strong maximum principle \cite[Theorem 1.1.1]{PS} ensures $ \underline{u} > 0 $ in $ \Omega $. Thus, by \eqref{fbelow2}--\eqref{subsolbound} one has
\begin{equation*}
-\Delta_\Phi \underline{u} = \frac{1}{\hat{n}} \leq 1 \leq f(x,\underline{u})
\end{equation*}
in weak sense. A standard argument involving the Boundary Point lemma \cite[Theorem 5.5.1]{PS} and the H\"older continuity of $ \nabla \underline{u} $ gives \eqref{subsolprops}.
\end{proof}

\begin{rmk}
\label{deltaR}
Without loss of generality, one can choose $\delta < R$ in \eqref{fbelow2}, where $R>0$ comes from ${\rm H(f)_3}$. Hereafter we make this assumption, which yields $ \|\underline{u}\|_{L^\infty(\Omega)} \leq \delta < R$, according to \eqref{subsolbound}.
\end{rmk}

Let us consider the auxiliary problem
\begin{equation}
\label{auxprob}
\tag{${\rm P}_{1,\hat{f}}$}
\left\{
\begin{alignedat}{2}
-\Delta_\Phi u &= \hat{f}(x,u) \quad &&\mbox{in} \;\; \Omega, \\
u &= 0 \quad &&\mbox{on} \;\; \partial \Omega, 
\end{alignedat}
\right.
\end{equation}
where $ \hat{f}: \Omega \times \R \to [0,+\infty) $ is defined as
\begin{equation*}
\hat{f}(x,s) := \left\{
\begin{alignedat}{1}
f(x,\underline{u}(x)) \quad &\mbox{if} \;\; |s| \leq \underline{u}(x), \\
f(x,|s|) \quad &\mbox{if} \;\; |s| > \underline{u}(x),
\end{alignedat}
\right.
\end{equation*}
being $ \underline{u} $ as in Lemma \ref{subsollemma}. We also define
\begin{equation*}
\hat{F}(x,s) := \int_0^s \hat{f}(x,t) \dt.
\end{equation*}
Exploiting \eqref{fgrowth} and \eqref{subsolprops} one has
\begin{equation}
\label{fest}
\begin{split}
0\leq \hat{f}(x,s) &\leq c_1 \overline{\Upsilon}^{-1}(\Upsilon(|s|)) + c_1 \overline{\Upsilon}^{-1}(\Upsilon(\underline{u}(x))) + c_2 \underline{u}(x)^{-\gamma} \\
&\leq c_1 \overline{\Upsilon}^{-1}(\Upsilon(|s|)) + \alpha d(x)^{-\gamma} + \beta,
\end{split}
\end{equation}
being $ \alpha,\beta>0 $ such that
\begin{equation*}
\alpha := c_2 k_1^{-\gamma}, \quad \beta:= c_1 \overline{\Upsilon}^{-1}(\Upsilon(k_2 d_\Omega)).
\end{equation*}

\begin{lemma}
\label{equivalence}
Let ${\rm H(a)_1}$ and ${\rm H(f)_1}$ be satisfied. Then any $ u \in W^{1,\Phi}_0(\Omega) $ weak solution to \eqref{auxprob} is a weak solution to \eqref{prob} and vice-versa.
\end{lemma}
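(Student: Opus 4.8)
The plan is to show the two problems \eqref{prob} and \eqref{auxprob} have exactly the same weak solutions, the key point being that every weak solution of either problem lies pointwise above the subsolution $\underline{u}$, where $\hat f$ and $f$ coincide. First I would settle the easy direction: if $u \in W^{1,\Phi}_0(\Omega)$ solves \eqref{prob} in the sense of Definition \ref{soldef}, then $u>0$ in $\Omega$, so for any fixed compact $K \subset \Omega$ we have $u \geq \underline{u}$ on $K$ up to controlling the boundary behaviour; more robustly, I would argue directly that $u \geq \underline{u}$ a.e.\ in $\Omega$ by a weak comparison principle, and then $\hat f(x,u) = f(x,|u|) = f(x,u)$ a.e., so $u$ solves \eqref{auxprob} with the same test functions.

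The substantive content is the converse together with the comparison step, so I would organize the argument around the claim: \emph{every weak solution $u$ of \eqref{auxprob} satisfies $u \geq \underline{u}$ in $\Omega$}. To prove this I would test the weak formulation of \eqref{auxprob} with $v := (\underline{u}-u)^+ \in W^{1,\Phi}_0(\Omega)$ (this is an admissible test function since $W^{1,\Phi}_0(\Omega)$ is a lattice under $\Delta_2 \cap \nabla_2$, via \cite[Theorem 8.31]{AF} and truncation), and likewise test the weak (sub-)inequality $-\Delta_\Phi \underline{u} \leq f(x,\underline{u})$ from Lemma \ref{subsollemma} with the same $v$. Subtracting, on the set $\{\underline{u}>u\}$ one has $\hat f(x,u) = f(x,\underline{u})$ by the very definition of $\hat f$ (here $|u| \leq u^+ < \underline{u}$ when $u \le 0$, and when $0<u<\underline{u}$ also $|s| \le \underline{u}$), so the two right-hand sides cancel and we are left with
\begin{equation*}
\int_{\{\underline{u}>u\}} \left( a(|\nabla \underline{u}|)\nabla \underline{u} - a(|\nabla u|)\nabla u \right) \cdot \nabla(\underline{u}-u) \dx \leq 0.
\end{equation*}
By the strict monotonicity of $-\Delta_\Phi$ (Lemma \ref{princprop}) the integrand is nonnegative and vanishes only where $\nabla \underline{u} = \nabla u$; hence $\nabla(\underline{u}-u)^+ = 0$ a.e., so $(\underline{u}-u)^+$ is constant, and being zero on $\partial\Omega$ it vanishes identically, i.e.\ $u \geq \underline{u}$ a.e. Once this is established, on $\{u \geq \underline{u}\} = \Omega$ we get $\hat f(x,u) = f(x,u)$ a.e., the integrability $f(\cdot,u)v \in L^1(\Omega)$ transfers (it is already built into being a solution of \eqref{auxprob}, and $\hat f(\cdot,u) = f(\cdot,u)$), and $u>0$ follows from $u \ge \underline{u}>0$ in $\Omega$; thus $u$ solves \eqref{prob}.

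The main obstacle I anticipate is the rigorous justification of testing with $(\underline{u}-u)^+$ and the cancellation of the right-hand sides: one must be careful that $\underline{u}$ is only a \emph{sub}solution, so only the inequality $\langle -\Delta_\Phi \underline{u}, v\rangle \le \int_\Omega f(x,\underline{u}) v \dx$ is available for $v \ge 0$, and one must check that the singular term $f(x,\underline{u})v$ is integrable against $v=(\underline{u}-u)^+$ — but this is clear from \eqref{fest} combined with $v \le \underline{u} \le k_2 d$ and the Hardy-type bound $d^{-\gamma} d \in L^1$ since $\gamma<1$. A secondary technical point is confirming that $(\underline{u}-u)^+ \in W^{1,\Phi}_0(\Omega)$; this rests on $\underline{u} \in C^1_0(\overline\Omega) \subset W^{1,\Phi}_0(\Omega)$, on $W^{1,\Phi}_0(\Omega)$ being closed under the operations $\max\{\cdot,0\}$ (a consequence of $\Phi \in \Delta_2$), and on $(\underline{u}-u)^+$ vanishing near $\partial\Omega$ because $\underline{u}$ does. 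With these two points handled, the remainder is the routine monotonicity/Poincaré argument sketched above.
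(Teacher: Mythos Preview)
Your overall strategy coincides with the paper's: both directions reduce to showing $u \geq \underline{u}$, after which $\hat f(\cdot,u)=f(\cdot,u)$ and the equivalence is immediate. The paper invokes the weak comparison principle abstractly (\cite[Theorem 3.4.1]{PS}), whereas you spell out the test with $(\underline{u}-u)^+$; this is the same argument in more detail.

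There is, however, a genuine slip in your justification that $\hat f(x,u)=f(x,\underline{u})$ on $\{\underline{u}>u\}$. You write ``$|u|\le u^+<\underline{u}$ when $u\le 0$'', but for $u<0$ one has $u^+=0$ and $|u|=-u>0$, so $|u|\le u^+$ is false. If $u$ were very negative at some point, then $|u|>\underline{u}$ there and the definition of $\hat f$ gives $\hat f(x,u)=f(x,|u|)$, not $f(x,\underline{u})$; the right-hand sides would then \emph{not} cancel. The paper handles this by first noting that $\hat f\ge 0$ and applying the weak maximum principle to obtain $u\ge 0$; only then, on $\{u<\underline{u}\}$, does one legitimately have $|u|=u<\underline{u}$ and hence $\hat f(x,u)=f(x,\underline{u})$. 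You should insert this preliminary step (e.g.\ test \eqref{auxprob} with $-u^-$ and use $\hat f\ge 0$ together with strict monotonicity).

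For the direction \eqref{prob} $\Rightarrow$ \eqref{auxprob} your sketch is too vague. Testing the sub-solution inequality $-\Delta_\Phi\underline{u}\le f(x,\underline{u})$ against $(\underline{u}-u)^+$ and subtracting from \eqref{prob} leaves the term $\int (f(x,\underline{u})-f(x,u))(\underline{u}-u)^+\dx$ on the right, and there is no monotonicity of $f$ to control its sign. The paper instead exploits the explicit construction of $\underline{u}$: since $-\Delta_\Phi\underline{u}=1/\hat n\le 1$ and, by \eqref{fbelow2}--\eqref{subsolbound}, $f(x,u)\ge 1$ on $\{0<u\le\underline{u}\}\subseteq\{0<u<\delta\}$, one gets $-\Delta_\Phi\underline{u}\le -\Delta_\Phi u$ on that set directly, and comparison applies.
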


\begin{proof}
To show the equivalence of \eqref{auxprob} and \eqref{prob}, it suffices to prove that any solution to either \eqref{auxprob} or \eqref{prob} is greater than $\underline{u}$; then the conclusion will follow by the definition of $ \hat{f} $.

Let $u \in W^{1,\Phi}_0(\Omega)$ be a weak solution to \eqref{auxprob}. The weak maximum principle, jointly with $ \hat{f} \geq 0 $, ensures $ u \geq 0 $. Then Lemma \ref{subsollemma} and the weak comparison principle (cf., e.g., \cite[Theorem 3.4.1]{PS}), applied on $ u $ and $ \underline{u} $, yields $ u \geq \underline{u} $: indeed $-\Delta_\Phi$ is a strictly monotone operator (see Lemma \ref{princprop}) and, in weak sense,
\begin{equation*}
-\Delta_\Phi \underline{u} \leq f(x,\underline{u}) = \hat{f}(x,u) = -\Delta_\Phi u \quad \mbox{in} \;\; \{x \in \Omega: \, u(x) \leq \underline{u}(x)\}.
\end{equation*}

Now let $u \in W^{1,\Phi}_0(\Omega)$ be a weak solution to \eqref{prob}. Recalling that $\|\underline{u}\|_{L^\infty(\Omega)} \leq \delta$ by \eqref{subsolbound}, from \eqref{fbelow2} we get
\begin{equation*}
-\Delta_\Phi \underline{u} = \frac{1}{\hat{n}} \leq 1 \leq f(x,u) = -\Delta_\Phi u \quad \mbox{in} \;\; \{x \in \Omega: \, u(x) \leq \underline{u}(x)\},
\end{equation*}
where $\delta,\hat{n}$ come from Lemma \ref{subsollemma}. As above, the weak comparison principle ensures $u\geq \underline{u}$.
\end{proof}

\begin{lemma}
\label{reactprop}
Suppose ${\rm H(a)_1}$--${\rm H(a)_2}$ and ${\rm H(f)_1}$--${\rm H(f)_2}$. Then the functional $ K:W^{1,\Phi}_0(\Omega) \to \R $ defined as
\begin{equation*}
K(u) := \int_\Omega \hat{F}(x,u) \dx
\end{equation*}
is well defined, weakly sequentially continuous, and of class $ C^1 $, with
\begin{equation}
\label{Kder}
\langle K'(u),v \rangle = \int_\Omega \hat{f}(x,u)v \dx  \quad \forall u,v \in W^{1,\Phi}_0(\Omega).
\end{equation}
Moreover, $ K':W^{1,\Phi}_0(\Omega) \to W^{-1,\overline{\Phi}}(\Omega) $ is a completely continuous operator.
\end{lemma}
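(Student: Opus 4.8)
The plan is to establish the three assertions—well-definedness, $C^1$ regularity with formula \eqref{Kder}, and complete continuity of $K'$—by exploiting the growth bound \eqref{fest} together with the compact embedding $W^{1,\Phi}_0(\Omega) \hookrightarrow L^\Upsilon(\Omega)$ (valid since $s_\Upsilon < i_{\Phi_*}$ implies $\Upsilon \ll \Phi_*$ via \eqref{chainimpl}). First I would address well-definedness: integrating \eqref{fest} in $s$ gives a pointwise bound $\hat F(x,s) \leq c_1 s \overline{\Upsilon}^{-1}(\Upsilon(|s|)) + \alpha d(x)^{-\gamma}|s| + \beta|s|$, and using \eqref{equivalent} the leading term is controlled by $2c_1\Upsilon(|s|)$. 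Since $u \in W^{1,\Phi}_0(\Omega) \hookrightarrow L^\Upsilon(\Omega)$, the term $\int_\Omega \Upsilon(|u|)\dx$ is finite; the term $\int_\Omega d(x)^{-\gamma}|u|\dx$ is finite because $d^{-\gamma} \in L^1(\Omega)$ for $\gamma \in (0,1)$ (boundary has finite $(N-1)$-measure, smooth boundary) and $u \in L^\infty$-free estimate via $d^{-\gamma}|u| \leq d^{-\gamma} \cdot C d^{?}$—more simply, Hölder with $d^{-\gamma} \in L^r$ for suitable $r$ and $u$ in the corresponding dual Lebesgue space, which holds since $\gamma<1$. So $K$ is finite-valued.

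Next I would prove that $K$ is $C^1$ with the stated derivative. The natural route is to show the Nemytskii-type operator $u \mapsto \hat f(\cdot,u)$ maps $W^{1,\Phi}_0(\Omega)$ continuously into $L^{\overline{\Upsilon}}(\Omega) + L^{N}(\Omega)$ (or into the dual $W^{-1,\overline{\Phi}}(\Omega)$ directly): from \eqref{fest}, $\hat f(x,u) \leq c_1 \overline{\Upsilon}^{-1}(\Upsilon(|u|)) + \alpha d^{-\gamma} + \beta$, and one checks $\overline{\Upsilon}(\overline{\Upsilon}^{-1}(\Upsilon(|u|))) = \Upsilon(|u|) \in L^1$, so $\overline{\Upsilon}^{-1}(\Upsilon(|u|)) \in L^{\overline{\Upsilon}}(\Omega)$, while $d^{-\gamma} + \beta$ lies in a fixed Lebesgue space that embeds into $W^{-1,\overline{\Phi}}(\Omega)$ by duality with the embedding $W^{1,\Phi}_0(\Omega) \hookrightarrow L^\Upsilon(\Omega) \cap L^{q}(\Omega)$ for appropriate $q$. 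The Gâteaux differentiability of $K$ with $\langle K'(u),v\rangle = \int_\Omega \hat f(x,u)v\dx$ follows from the dominated convergence theorem applied to the difference quotient, the dominating function coming again from \eqref{fest} (the singular term $d^{-\gamma}$ is handled since $|v| \leq C d$ near $\partial\Omega$ when $v \in W^{1,\Phi}_0(\Omega) \cap C^1$, but for general $v$ one uses a Hardy-type inequality $\|d^{-\gamma} v\|$ bounded—here the cleanest is to note $\gamma<1$ so $d^{-\gamma}v \in L^1$ uniformly). Continuity of $u \mapsto K'(u)$ then upgrades Gâteaux to Fréchet, hence $K \in C^1$.

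For complete continuity of $K'$: take $u_n \rightharpoonup u$ in $W^{1,\Phi}_0(\Omega)$; by compactness of $W^{1,\Phi}_0(\Omega) \hookrightarrow L^\Upsilon(\Omega)$ we get $u_n \to u$ in $L^\Upsilon(\Omega)$, hence (up to a subsequence) $u_n \to u$ a.e. and $\Upsilon(|u_n|)$ is uniformly integrable. Then $\hat f(\cdot,u_n) \to \hat f(\cdot,u)$ a.e. by continuity of $\hat f$ in $s$, and the growth bound \eqref{fest} together with uniform integrability of $\Upsilon(|u_n|)$ and the fixed $L^1$ function $\alpha d^{-\gamma}+\beta$ gives, via Vitali's convergence theorem, convergence $\hat f(\cdot,u_n) \to \hat f(\cdot,u)$ in the relevant space, whence $\sup_{\|v\| \leq 1}|\langle K'(u_n)-K'(u),v\rangle| \to 0$. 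A subsequence argument removes the passage to subsequences. The weak sequential continuity of $K$ itself follows similarly (or directly from $K'$ being completely continuous, since that forces $K$ weakly continuous along sequences after integrating).

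The main obstacle is the interplay between the singular term $d(x)^{-\gamma}$ and test functions $v$ that need not be bounded or vanish at a controlled rate near $\partial\Omega$: one must verify $\int_\Omega d^{-\gamma}|v|\dx \leq C\|v\|_{W^{1,\Phi}_0(\Omega)}$ uniformly. For $\gamma \in (0,1)$ this is true because $d^{-\gamma} \in L^{p'}(\Omega)$ for some $p' < \frac{1}{\gamma} \cdot$(dimension-adjusted exponent), and $W^{1,\Phi}_0(\Omega) \hookrightarrow L^p(\Omega)$ with $p$ the conjugate—but since $\Phi$ may have non-standard growth one should phrase this through the embedding $W^{1,\Phi}_0(\Omega) \hookrightarrow L^{\Phi_*}(\Omega) \hookrightarrow L^{i_{\Phi_*}}(\Omega)$ and check $\gamma < 1 < i_{\Phi_*}$ suffices for $d^{-\gamma} \in L^{(i_{\Phi_*})'}(\Omega)$, which holds as $(i_{\Phi_*})' > 1 > \gamma$ is not quite the right inequality—rather one needs $\gamma \cdot (i_{\Phi_*})' < 1$ to be avoided, so more care (or a Hardy inequality $\|d^{-1}v\|_{L^\Phi} \lesssim \|v\|_{W^{1,\Phi}_0}$ followed by interpolation with $L^\infty$-type control) is the delicate point. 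Everything else is a routine application of Krasnoselskii-type Nemytskii operator theory in Orlicz spaces combined with the compact embedding.
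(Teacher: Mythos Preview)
Your overall strategy matches the paper's: bound $\hat F$ via \eqref{fest}/\eqref{equivalent}, use the compact embedding $W^{1,\Phi}_0(\Omega)\hookrightarrow L^\Upsilon(\Omega)$ for well-definedness and complete continuity, and get \eqref{Kder} by dominated convergence on the difference quotient. Two points deserve sharpening.

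\emph{Well-definedness.} You integrate the pointwise bound \eqref{fest} and are then forced to control $\int_\Omega d^{-\gamma}|u|\dx$. The paper avoids this entirely: it integrates the \emph{original} growth bound \eqref{fgrowth} in $t$ (not the truncated estimate \eqref{fest}), which produces $|s|^{1-\gamma}$ rather than $d(x)^{-\gamma}|s|$. Since $i_\Upsilon>1$, one has $|s|^{1-\gamma}\leq 1+\Upsilon(|s|)/\Upsilon(1)$, and the contribution of $\underline u$ is bounded by a constant via \eqref{subsolprops}. This yields the clean estimate $|\hat F(x,s)|\leq C_1+C_2\Upsilon(|s|)$ with no boundary-singular term at all.

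\emph{The $d^{-\gamma}$ term against test functions.} Your H\"older route (pair $d^{-\gamma}\in L^r$, $r<1/\gamma$, with $v\in L^{i_{\Phi_*}}$) requires $\gamma(i_{\Phi_*})'<1$, which is \emph{not} implied by the hypotheses when $\gamma$ is close to $1$; so this is a genuine dead end, not merely inelegant. The Hardy inequality you mention at the end is indeed the correct tool, and no interpolation is needed: since $\Phi\in\nabla_2$, the Orlicz Hardy inequality $\|d^{-1}v\|_{L^\Phi(\Omega)}\leq c\|\nabla v\|_{L^\Phi(\Omega)}$ holds, and the elementary bound
\[
d(x)^{-\gamma}|v(x)| \;\leq\; d_\Omega^{\,1-\gamma}\, d(x)^{-1}|v(x)|
\]
immediately places $d^{-\gamma}|v|$ in $L^\Phi(\Omega)\subset L^1(\Omega)$, uniformly for $\|v\|_{W^{1,\Phi}_0(\Omega)}\leq 1$. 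This is exactly how the paper builds the dominating function for the G\^ateaux derivative. With this in hand, the rest of your argument (Vitali for complete continuity of $K'$, weak sequential continuity of $K$) goes through as you outline.
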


\begin{proof}
Using \eqref{fgrowth} and \eqref{equivalent} we estimate $ \hat{F} $ as
\begin{equation*}
\begin{split}
|\hat{F}(x,s)| &\leq \int_0^{|s|} \hat{f}(x,t) \dt = \int_0^{\underline{u}(x)} f(x,\underline{u}(x)) \dt + \int_{\underline{u}(x)}^{|s|} f(x,t) \dt \\
&\leq \underline{u}(x) \left[ c_1 \overline{\Upsilon}^{-1}(\Upsilon(\underline{u}(x))) + c_2 \underline{u}(x)^{-\gamma} \right] + \int_0^{|s|} \left[ c_1 \overline{\Upsilon}^{-1}(\Upsilon(t)) + c_2 t^{-\gamma} \right] \dt \\
&\leq c_1 \underline{u}(x)\overline{\Upsilon}^{-1}(\Upsilon(\underline{u}(x))) + c_2 \underline{u}(x)^{1-\gamma} + c_1 |s| \overline{\Upsilon}^{-1}(\Upsilon(|s|)) + \frac{c_2}{1-\gamma} |s|^{1-\gamma} \\
&\leq 2c_1 \Upsilon(\underline{u}(x)) + c_2 \underline{u}(x)^{1-\gamma} + 2c_1 \Upsilon(|s|) + \frac{c_2}{1-\gamma} |s|^{1-\gamma}
\end{split}
\end{equation*}
for all $ (x,s) \in \Omega \times \R $. Exploiting \eqref{subsolprops} and $i_\Upsilon > 1$ we obtain
\begin{equation}
\label{Fest}
|\hat{F}(x,s)| \leq C_1 + C_2 \Upsilon(|s|),
\end{equation}
with positive constants
\begin{equation}
\label{constants}
\begin{split}
C_1 &:= 2c_1 \Upsilon(k_2 d_\Omega) + c_2 (k_2 d_\Omega)^{1-\gamma} + \frac{c_2}{1-\gamma}, \\
C_2 &:= 2c_1 + \frac{c_2}{1-\gamma} \frac{1}{\Upsilon(1)}.
\end{split}
\end{equation}
Taking into account also that $W^{1,\Phi}_0(\Omega) \hookrightarrow L^\Upsilon(\Omega)$ because of $\Upsilon \ll \Phi_*$, we deduce that $ K $ is well defined.

Now we compute the G\^{a}teaux derivative of $K$. We fix $v \in W^{1,\Phi}_0(\Omega)$ and apply Torricelli's theorem to deduce
\begin{equation}
\label{Kder2}
\begin{split}
\lim_{t \to 0^+} \frac{K(u+tv)-K(u)}{t} &= \lim_{t \to 0^+} \int_\Omega \frac{\hat{F}(x,u+tv)-\hat{F}(x,u)}{t} \dx \\
&= \lim_{t \to 0^+} \int_\Omega v \left( \int_0^1 \hat{f}(x,u+stv) \ds \right) \dx.
\end{split}
\end{equation}
According to \eqref{fest}, \eqref{equivalent}, the embedding $W^{1,\Phi}_0(\Omega) \hookrightarrow L^\Upsilon(\Omega)$, and the Hardy inequality \cite[Corollary 1]{C}\footnote{We use Hardy's inequality in the form
\begin{equation*}
\|d^{-1}v\|_{L^\Phi(\Omega)} \leq c \|\nabla v\|_{L^\Phi(\Omega)} \quad \forall v \in W^{1,\Phi}_0(\Omega),
\end{equation*}
being $c>0$ opportune. This inequality is valid since $\Phi \in \nabla_2$.}, besides supposing $t\in(0,1)$, we infer
\begin{equation*}
\begin{split}
|v\hat{f}(x,u+stv)| &\leq c_1|v|\overline{\Upsilon}^{-1}(\Upsilon(|u|+|v|)) + \alpha d^{-\gamma}|v| + \beta|v| \\
&\leq c_1(|u|+|v|)\overline{\Upsilon}^{-1}(\Upsilon(|u|+|v|)) + \alpha d^{-\gamma}|v| + \beta|v| \\
&\leq 2c_1\Upsilon(|u|+|v|) + \alpha d_\Omega^{1-\gamma} d^{-1}|v| + \beta|v| \in L^1(\Omega).
\end{split}
\end{equation*}
Hence, we can pass the limit under the integral sign in \eqref{Kder2} and get \eqref{Kder}. The remaining part of the proof follows exactly as in \cite[Lemma 2.3]{CGP}, using the compactness of the embedding $W^{1,\Phi}_0(\Omega) \hookrightarrow L^\Upsilon(\Omega)$, as well as \eqref{fest} and \cite[Lemma 7.3]{SGC}.
\end{proof}

\begin{rmk}
Fix any $u \in W^{1,\Phi}_0(\Omega)$. By \eqref{Fest} we deduce that
\begin{equation}
\label{compact}
\int_{\Omega \cap \{|u| \leq \rho\}} |\hat{F}(x,u)| \dx \leq (C_1 + C_2\Upsilon(\rho))|\Omega| =: \Pi(\rho) \quad \forall \rho \geq 0.
\end{equation}
\end{rmk}

\section{Regularity of solutions}

In this section we prove $C^{1,\alpha}$ regularity up to the boundary for solutions to \eqref{prob}.

Given a measurable function $u:\Omega \to \R$, for any $k \in \R$ we set
\begin{equation*}
\Omega_k := \{x \in \Omega: \, u(x) \geq k\}.
\end{equation*}
\begin{lemma}
Let $p,r>1$. Suppose that $u \in L^p(\Omega)$ satisfies
\begin{equation}
\label{degiorgi}
\left( \int_{\Omega_k} (u-k)^{p} \dx \right)^{\frac{1}{r}} \leq c \left[ \int_{\Omega_k} (u-k)^{p} \dx + k^{p}|\Omega_k| \right] \quad \mbox{for all} \;\; k \geq K,
\end{equation}
for suitable $c,K>0$. Then there exists $M>0$ such that $u \leq M$ in $\Omega$.
\end{lemma}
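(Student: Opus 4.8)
The statement is a De Giorgi–type $L^\infty$ bound, so the plan is to run a fast geometric iteration on the super-level sets of $u$. First I would set $\psi(k):=\int_{\Omega_k}(u-k)^p\dx$, a non-increasing function of $k$, and record two elementary facts. For $0\le k<h$ one has $\Omega_h\subseteq\Omega_k$ and $u-k\ge h-k$ on $\Omega_h$, whence
\[
|\Omega_h|\le\frac{\psi(k)}{(h-k)^p}.
\]
Moreover, since $u\in L^p(\Omega)$, dominated convergence (the integrands vanish pointwise on the shrinking sets $\Omega_k$ and are dominated by $|u|^p$) yields $\psi(k)\to 0$ and $\int_{\Omega_k}|u|^p\dx\to 0$ as $k\to+\infty$; also $k^p|\Omega_k|\le\int_{\Omega_k}|u|^p\dx$ for every $k\ge 0$.

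The next step is to absorb the ``bad'' linear term on the right-hand side of \eqref{degiorgi}: because $\psi(k)\to 0$ and $r>1$, there is $K_1\ge K$ with $\psi(k)\le(2c)^{-r/(r-1)}$, hence $c\,\psi(k)\le\tfrac12\psi(k)^{1/r}$, for all $k\ge K_1$; plugging this into \eqref{degiorgi} gives
\[
\psi(k)\le(2c)^r\,k^{pr}\,|\Omega_k|^r\qquad\text{for all }k\ge K_1.
\]
Then I would fix $M\ge K_1$ (to be chosen large at the end), set $k_n:=M(2-2^{-n})$, so that $k_0=M$, $k_n\uparrow 2M$ and $k_{n+1}-k_n=M\,2^{-(n+1)}$, and put $a_n:=|\Omega_{k_n}|$. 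Combining the two displays above with $k_n\le 2M$ yields the recurrence
\[
a_{n+1}\le\frac{\psi(k_n)}{(k_{n+1}-k_n)^p}\le C_0\,M^{p(r-1)}\,(2^p)^{n}\,a_n^{\,r},\qquad C_0:=(2c)^r 2^{p(r+1)}.
\]

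This is exactly the hypothesis of the standard fast-geometric-convergence lemma (see, e.g., \cite[Lemma 2.5.4]{LU}): if $a_{n+1}\le \mathcal{C}\,B^{n}a_n^{1+\delta}$ with $B>1$, $\delta>0$, and $a_0\le\mathcal{C}^{-1/\delta}B^{-1/\delta^2}$, then $a_n\to 0$. Here $\delta=r-1$, $B=2^p$, $\mathcal{C}=C_0 M^{p(r-1)}$, and the smallness condition reduces to $M^p a_0\le C_0^{-1/(r-1)}2^{-p/(r-1)^2}=:\eta$, a constant depending only on $c,p,r$. Since $M^p a_0=M^p|\Omega_M|\le\int_{\Omega_M}|u|^p\dx\to 0$ as $M\to+\infty$, I would fix $M\ge K_1$ so large that this holds; then $a_n\to 0$, and since $\{u\ge 2M\}\subseteq\Omega_{k_n}$ for every $n$, it follows that $|\{u\ge 2M\}|=0$, i.e.\ $u\le 2M$ a.e.\ in $\Omega$, proving the claim (with constant $2M$). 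The only genuinely delicate point is the absorption step: one must use $u\in L^p(\Omega)$ to neutralize the term $c\int_{\Omega_k}(u-k)^p\dx$ in \eqref{degiorgi}, which carries no small coefficient; everything afterwards is routine. Note that $r>1$ is used twice, for this absorption and to have $\delta=r-1>0$ in the iteration lemma.
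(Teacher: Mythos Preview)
Your proof is correct and, like the paper, is a De Giorgi iteration culminating in the fast geometric convergence lemma \cite[Lemma 2.4.7]{LU}; however, the tactical choices differ enough to be worth noting. The paper iterates directly on the truncated masses $y_n:=\int_{\Omega_{k_n}}(u-k_n)^p\dx$: it applies \eqref{degiorgi} at level $k_{n+1}$, bounds the first term on the right by $y_n$ via monotonicity of $k\mapsto\psi(k)$, and bounds the second term $k_{n+1}^p|\Omega_{k_{n+1}}|$ by $2^{(n+2)p}y_n$ via Chebyshev, obtaining the recursion $y_{n+1}\le Cb^n y_n^r$ without any preliminary absorption. You instead first use $u\in L^p(\Omega)$ to absorb the term $c\,\psi(k)$ into the left-hand side, reducing \eqref{degiorgi} to $\psi(k)\le(2c)^r k^{pr}|\Omega_k|^r$, and then iterate on the measures $a_n:=|\Omega_{k_n}|$. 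Your route buys a cleaner recursion (only one term on the right after absorption) at the price of an extra preparatory step; the paper's route is slightly more direct and makes the role of Chebyshev's inequality as the bridge between the two terms of \eqref{degiorgi} more transparent. Both choices of level sequence ($k_n\uparrow M$ in the paper, $k_n\uparrow 2M$ in your argument) and both choices of iterated quantity are standard and interchangeable.
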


\begin{proof}
Let us fix $M>2K$ to be chosen later, and set
\begin{equation}
\label{kn}
k_n := M \left( 1-\frac{1}{2^{n+1}} \right) \quad \forall n \in \N_0 := \N \cup \{0\}.
\end{equation}
By \eqref{degiorgi} and \eqref{kn} we have, for all $n \in \N_0$,
\begin{equation}
\label{degiorgin}
\left( \int_{\Omega_{k_{n+1}}} (u-k_{n+1})^{p} \dx \right)^{\frac{1}{r}} \leq c \left[ \int_{\Omega_{k_n}} (u-k_n)^{p} \dx + k_{n+1}^{p}|\Omega_{k_{n+1}}| \right].
\end{equation}
Chebichev's inequality entails
\begin{equation*}
(k_{n+1}-k_n)^{p}|\Omega_{k_{n+1}}| \leq \int_{\Omega_{k_n}} (u-k_n)^{p} \dx.
\end{equation*}
Thus, recalling \eqref{kn} and $k>1$,
\begin{equation}
\label{chebichev}
k_{n+1}^{p}|\Omega_{k_{n+1}}| \leq \frac{k_{n+1}^{p}}{(k_{n+1}-k_n)^{p}} \int_{\Omega_{k_n}} (u-k_n)^{p} \dx \leq 2^{(n+2)p} \int_{\Omega_{k_n}} (u-k_n)^{p} \dx.
\end{equation}
Inserting \eqref{chebichev} into \eqref{degiorgin} gives
\begin{equation}
\label{degiorgin2}
\begin{split}
\int_{\Omega_{k_{n+1}}} (u-k_{n+1})^{p} \dx &\leq \left[ c(2^{(n+2)p}+1) \int_{\Omega_{k_n}} (u-k_n)^{p} \dx \right]^r \\
&\leq Cb^n \left( \int_{\Omega_{k_n}} (u-k_n)^{p} \dx \right)^{1+a}
\end{split}
\end{equation}
for all $n \in \N_0$, where $a:=r-1>0$, $b:=2^{pr}>1$, and $C=C(c,p,r)>0$ is a suitable constant independent of $k$ and $n$. Now we apply the fast geometric convergence lemma \cite[Lemma 2.4.7]{LU} to the sequence $y_n:=\int_{\Omega_{k_n}} (u-k_n)^{p} \dx$, which ensures $y_n \to 0$ provided
\begin{equation}
\label{smallness}
y_0 \leq C^{-\frac{1}{a}}b^{-\frac{1}{a^2}}.
\end{equation}
We can choose $M$, independent of $n$, such that \eqref{smallness} holds true: indeed, by \eqref{kn} and the dominated convergence theorem,
\begin{equation}
\label{small}
y_0 = \int_{\Omega_{k_0}} (u-k_0)^{p} \dx = \int_\Omega \left( u-\frac{M}{2} \right)_+^{p} \dx \xrightarrow{M \to +\infty} 0.
\end{equation}
Keeping $M$ fixed as in \eqref{smallness}--\eqref{small}, from $y_n \to 0$ we obtain
\begin{equation*}
\int_\Omega (u-M)_+^{p} \dx \leq \int_\Omega (u-k_n)_+^{p} \dx = \int_{\Omega_{k_n}} (u-k_n)^{p} \dx \xrightarrow{n \to \infty} 0,
\end{equation*}
which implies $\int_\Omega (u-M)_+^p \dx = 0$, whence $u \leq M$ in $\Omega$.
\end{proof}

\begin{lemma}
\label{moser}
Let ${\rm H(a)_1}$--${\rm H(a)_2}$ and ${\rm H(f)_2}$ be satisfied. Then any $ u \in W^{1,\Phi}_0(\Omega) $ weak solution to \eqref{prob} is essentially bounded in $\Omega$.
\end{lemma}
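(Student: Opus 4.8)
The plan is to use the De Giorgi-type iteration encapsulated in the previous (unnumbered) lemma. The strategy is to test the weak formulation of \eqref{prob} with the truncation $v_k := (u-k)_+ \in W^{1,\Phi}_0(\Omega)$ for $k \geq 1$, exploit the ellipticity of $-\Delta_\Phi$ to bound $\int_{\Omega_k} \Phi(|\nabla u|) \dx$ from above, then convert this gradient bound into an estimate of the form \eqref{degiorgi} via the Sobolev--Orlicz embedding $W^{1,\Phi}_0(\Omega) \hookrightarrow L^{\Phi_*}(\Omega)$ and H\"older's inequality in Orlicz spaces.

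Concretely, first I would test with $v_k$: since $\nabla v_k = \nabla u \,\mathbbm{1}_{\Omega_k}$, the left-hand side becomes $\int_{\Omega_k} a(|\nabla u|)|\nabla u|^2 \dx = \int_{\Omega_k} \phi(|\nabla u|)|\nabla u|\dx \geq i_\Phi \int_{\Omega_k} \Phi(|\nabla u|)\dx$ by \eqref{comparison}/\eqref{factor} as in \eqref{coerc}, while the right-hand side is $\int_{\Omega_k} f(x,u)(u-k)\dx$. Using ${\rm H(f)_2}$ in the form \eqref{fgrowth} (replacing $\overline\Upsilon^{-1}(\Upsilon(s))$ by $\tfrac{\Upsilon(s)}{s}$ via \eqref{equivalent}) together with $k \geq 1 > \gamma$ bounds $f(x,u)(u-k) \lesssim \Upsilon(u) + u^{1-\gamma} \lesssim \Upsilon(u) + 1$ on $\Omega_k$; since $\Upsilon \ll \Phi_* $ (hence $\Upsilon < \Phi_*$) and $u \in L^{\Phi_*}(\Omega)$, one controls $\int_{\Omega_k}(\Upsilon(u)+1)\dx$ and, more importantly, splits $u = (u-k) + k$ to extract the structure $\int_{\Omega_k}\big[\Upsilon(u-k) + \Upsilon(k)\big]\dx + |\Omega_k|$ using $\Upsilon \in \Delta_2$. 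The next step is the embedding: from $\int_{\Omega_k}\Phi(|\nabla v_k|)\dx \leq C\big[\int_{\Omega_k}\Upsilon(u-k)\dx + (1+\Upsilon(k))|\Omega_k|\big]$ and $\|v_k\|_{L^{\Phi_*}(\Omega)} \leq C\|\nabla v_k\|_{L^\Phi(\Omega)}$, combined with \eqref{comparison} applied to both $\Phi$ and $\Phi_*$, one gets $\int_{\Omega_k}(u-k)^{i_{\Phi_*}}\dx$ (or the $s_{\Phi_*}$-power, whichever is relevant for $\|v_k\|_{L^{\Phi_*}}$ large/small) controlled by a power of the right-hand side. Choosing $p$ between $\max\{i_\Upsilon, s_\Upsilon\}$ and $i_{\Phi_*}$ — which is possible precisely because $s_\Upsilon < i_{\Phi_*}$ — and using $\Upsilon(t) \lesssim 1 + t^p$ together with $t^p \lesssim 1 + \Phi_*(t)$ on the relevant range, one massages everything into the scalar inequality \eqref{degiorgi} with exponents $p$ and $r := i_{\Phi_*}/p > 1$ (for $k \geq K$ with $K$ large). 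Then the previous lemma immediately yields $u \leq M$; applying the same reasoning to $-u$ (or noting $u \geq 0$ by the weak maximum principle, so only the upper bound is needed) gives $u \in L^\infty(\Omega)$.

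The main obstacle is bookkeeping the Orlicz norms versus integrals: the quantities $\|\nabla v_k\|_{L^\Phi}$ and $\|v_k\|_{L^{\Phi_*}}$ may be either large or small depending on $k$, so \eqref{comparison} and \eqref{factor} must be applied on the correct branch of $\underline\zeta,\overline\zeta$, and one must ensure the final inequality has the \emph{homogeneous} form \eqref{degiorgi} with a single power $p$ on both sides — this forces a careful choice of $p \in (s_\Upsilon, i_{\Phi_*})$ and the use of $s_\Phi < i_{\Phi_*}$ (the second part of ${\rm H(a)_2}$) to align the gradient exponent with the embedding exponent. A secondary point is handling the $u^{1-\gamma}$ term from the singular part: since it is sublinear it is dominated by $1 + u^p$ on $\Omega_k \subseteq \{u \geq k \geq 1\}$ and contributes only to the $|\Omega_k|$ and $\int(u-k)^p$ terms after splitting, so it causes no real trouble. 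Once the scalar inequality \eqref{degiorgi} is in place, the conclusion is automatic from the already-proved De Giorgi iteration lemma.
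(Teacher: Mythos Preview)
Your strategy is exactly the paper's: test with $(u-k)_+$, bound the right-hand side via \eqref{fgrowth} and \eqref{equivalent}, bound the left-hand side from below via the Sobolev--Orlicz embedding and \eqref{comparison}, then invoke the De Giorgi lemma. The handling of the singular term $u^{1-\gamma}$ is also the same (it is absorbed by $\Upsilon(u)$ on $\{u\geq k\}$ for $k$ large).

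However, your exponent bookkeeping is off. There is no need to pick an intermediate $p\in(s_\Upsilon,i_{\Phi_*})$: the paper takes $p=i_{\Phi_*}$ directly. On $\Omega_k$ (where $u>1$) one has $\Upsilon(u)\leq \Upsilon(1)\,u^{s_\Upsilon}\leq \Upsilon(1)\,u^{i_{\Phi_*}}$ by \eqref{factor} and $s_\Upsilon\leq i_{\Phi_*}$, and then the splitting $u^{i_{\Phi_*}}\leq 2^{i_{\Phi_*}-1}[(u-k)^{i_{\Phi_*}}+k^{i_{\Phi_*}}]$ gives the right-hand side of \eqref{degiorgi}. On the left, the embedding $W^{1,\Phi}_0(\Omega)\hookrightarrow L^{i_{\Phi_*}}(\Omega)$ together with \eqref{comparison} (on the branch $\|\nabla(u-k)_+\|_{L^\Phi}\leq 1$, valid for $k$ large) yields
\[
\int_{\Omega_k}\Phi(|\nabla u|)\dx\;\geq\; c^{s_\Phi}\Bigl(\int_{\Omega_k}(u-k)^{i_{\Phi_*}}\dx\Bigr)^{s_\Phi/i_{\Phi_*}}.
\]
Thus the correct $r$ is $i_{\Phi_*}/s_\Phi$, \emph{not} $i_{\Phi_*}/p$; the condition $r>1$ is precisely $s_\Phi<i_{\Phi_*}$, i.e.\ the second half of ${\rm H(a)_2}$, and has nothing to do with $s_\Upsilon$. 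Your formula $r=i_{\Phi_*}/p$ would arise only if the embedding produced the exponent $p/i_{\Phi_*}$ on the left, which it does not --- the gradient side always contributes $s_\Phi$. With this correction your sketch is complete.
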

\begin{proof}
Pick any $k>1$. Testing \eqref{prob} with $(u-k)_+$ and using \eqref{fgrowth} yield
\begin{equation}
\label{mosereq}
\begin{split}
\int_{\Omega_k} \Phi(|\nabla u|) \dx &\leq i_\Phi^{-1} \int_{\Omega_k} \phi(|\nabla u|)|\nabla u| \dx = i_\Phi^{-1} \int_{\Omega_k} f(x,u)(u-k) \dx \\
&\leq i_\Phi^{-1} \left[ c_1 \int_{\Omega_k} \overline{\Upsilon}^{-1}(\Upsilon(u))u \dx + c_2 \int_{\Omega_k} u^{1-\gamma} \dx \right].
\end{split}
\end{equation}
First we estimate each term on the right-hand side of \eqref{mosereq}. By convexity of $\Upsilon$ and \eqref{equivalent} we have, for any $k$ sufficiently large,
\begin{equation}
\label{singular}
u^{1-\gamma} \leq \Upsilon(u) \leq \overline{\Upsilon}^{-1}(\Upsilon(u))u \quad \mbox{in} \;\; \Omega_k.
\end{equation}
Moreover, by \eqref{equivalent} and \eqref{factor}, besides $s_\Upsilon \leq i_{\Phi_*}$ and $k>1$, it turns out that
\begin{equation}
\label{regular}
\begin{split}
\int_{\Omega_k} \overline{\Upsilon}^{-1}(\Upsilon(u))u \dx &\leq 2 \int_{\Omega_k} \Upsilon(u) \dx \leq 2 \Upsilon(1) \int_{\Omega_k} u^{s_\Upsilon} \dx \leq 2 \Upsilon(1) \int_{\Omega_k} u^{i_{\Phi_*}} \dx \\
&\leq 2^{i_{\Phi_*}} \Upsilon(1) \left[ \int_{\Omega_k} (u-k)^{i_{\Phi_*}} \dx + k^{i_{\Phi_*}}|\Omega_k| \right].
\end{split}
\end{equation}
On the other hand, to estimate the left-hand side of \eqref{mosereq}, we observe that the Sobolev embedding theorem and $ t^{i_{\Phi_*}}<\Phi_*$ in the sense of \eqref{Youngcomp} (see \eqref{factor}) yield
\begin{equation*}
W^{1,\Phi}_0(\Omega) \hookrightarrow L^{\Phi_*}(\Omega) \hookrightarrow L^{i_{\Phi_*}}(\Omega),
\end{equation*}
where the latter space is a Lebesgue space. We deduce the embedding inequality
\begin{equation*}
c\|w\|_{L^{i_{\Phi_*}}(\Omega)} \leq \|\nabla w\|_{L^\Phi(\Omega)} \quad \forall w \in W^{1,\Phi}_0(\Omega)
\end{equation*}
for a suitable $c>0$. Thus, choosing $w=(u-k)_+$, from \eqref{comparison} we get
\begin{equation*}
\int_{\Omega_k} \Phi(|\nabla u|) \dx \geq \underline{\zeta}_{\Phi}(\|\nabla u\|_{L^\Phi(\Omega)}) \geq \underline{\zeta}_{\Phi}(\|\nabla (u-k)_+\|_{L^\Phi(\Omega)}) \geq \underline{\zeta}_{\Phi}(c\|(u-k)_+\|_{L^{i_{\Phi_*}}(\Omega)}).
\end{equation*}
Reasoning as in \eqref{small}, for any $k$ big enough we get $c\|(u-k)_+\|_{L^{i_{\Phi_*}}(\Omega)}\leq 1$, so that
\begin{equation}
\label{embedding}
\int_{\Omega_k} \Phi(|\nabla u|) \dx \geq c^{s_\Phi} \left( \int_{\Omega_k} (u-k)^{i_{\Phi_*}} \dx \right)^{\frac{s_\Phi}{i_{\Phi_*}}}.
\end{equation}
Inserting \eqref{singular}--\eqref{embedding} into \eqref{mosereq} we deduce
\begin{equation*}
\left( \int_{\Omega_k} (u-k)^{i_{\Phi_*}} \dx \right)^{\frac{s_\Phi}{i_{\Phi_*}}} \leq C \left[ \int_{\Omega_k} (u-k)^{i_{\Phi_*}} \dx + k^{i_{\Phi_*}}|\Omega_k| \right]
\end{equation*}
for a sufficiently large $C>0$. Hence applying Lemma \ref{degiorgi} with $p=i_{\Phi_*}>1$ and $r=\frac{i_{\Phi_*}}{s_\Phi}>1$ (cf. ${\rm H(a)_2}$) yields the conclusion.
\end{proof}

\begin{rmk}
The $L^\infty$ estimate provided in Lemma \ref{moser} is valid also when $s_\Upsilon = i_{\Phi_*}$ which, in the classical Sobolev setting, represents the \textit{critical case}; hence $M$ must depend on the solution $u$. On the other hand, in the \textit{sub-critical case} $s_\Upsilon<i_{\Phi_*}$ this estimate can be improved, and it turns out that $M$ depends only on $\|u\|_{W^{1,\Phi}_0(\Omega)}$ instead of $u$ itself.
\end{rmk}

\begin{thm}
\label{holder}
Let ${\rm H(a)_1}$--${\rm H(a)_2}$ and ${\rm H(f)_1}$--${\rm H(f)_2}$ be satisfied. Then any $ u \in W^{1,\Phi}_0(\Omega) $ weak solution of \eqref{prob} belongs to $C^{1,\tau}_0(\overline{\Omega})$ for some $\tau \in (0,1]$.
\end{thm}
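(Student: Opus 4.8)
The plan is to combine the interior/boundary $L^\infty$ bound of Lemma \ref{moser} with a decomposition of the right-hand side into a "regular" part and a "singular" part, and then invoke two complementary regularity results: Lieberman's nonlinear regularity theory for the $\Phi$-Laplacian with bounded right-hand side, and the weighted/singular Schauder-type estimate originating in \cite{GST} (cf. also \cite{H}), propagated to the $\Phi$-Laplacian setting via the Campanato–Giaquinta–Giusti perturbation technique alluded to in the introduction.

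First I would record that, by Lemma \ref{equivalence}, any weak solution $u$ of \eqref{prob} is a weak solution of \eqref{auxprob}, hence $u \geq \underline{u}$ in $\Omega$; combined with Lemma \ref{subsollemma}, this gives the two-sided estimate $k_1 d(x) \leq \underline{u}(x) \leq u(x)$, so $u$ stays away from $0$ in the interior while $u = 0$ on $\partial\Omega$. By Lemma \ref{moser}, $u \in L^\infty(\Omega)$, say $\|u\|_{L^\infty(\Omega)} \leq M$. Now fix such a solution and write $g(x) := \lambda f(x,u(x))$ with $\lambda = 1$; by \eqref{fgrowth} and $u \in L^\infty(\Omega)$ one has
\begin{equation*}
0 \leq g(x) \leq c_1 \overline{\Upsilon}^{-1}(\Upsilon(M)) + c_2 u(x)^{-\gamma} \leq c_1 \overline{\Upsilon}^{-1}(\Upsilon(M)) + c_2 (k_1 d(x))^{-\gamma} =: A + B\, d(x)^{-\gamma},
\end{equation*}
with $\gamma \in (0,1)$. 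Thus the reaction term, though singular at $\partial\Omega$, is controlled by an $L^\infty$ part plus a term of the precise form $d(x)^{-\gamma}$ that is handled by the singular-regularity machinery.

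Next I would split $u = u_1 + u_2$ (or, more precisely, treat $u$ as a solution of a perturbed equation): on any interior subdomain $\Omega' \Subset \Omega$, the right-hand side $g$ is bounded, so Lieberman's theorem \cite[Theorem 1.7]{L} gives $u \in C^{1,\tau}_{\mathrm{loc}}(\Omega)$ with interior estimates. For the behaviour up to $\partial\Omega$, the model problem $-\Delta_\Phi w = d(x)^{-\gamma}$ with $w = 0$ on $\partial\Omega$ has a solution in $C^{1,\tau}_0(\overline{\Omega})$ by the singular-elliptic result traced to \cite{GST} (the exponent of $d$ being subcritical, $\gamma < 1$), and the bounded part $-\Delta_\Phi w = A$ with zero boundary data lies in $C^{1,\tau}_0(\overline{\Omega})$ again by \cite{L}. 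The Campanato/Giaquinta–Giusti perturbation argument then upgrades these into a genuine $C^{1,\tau}$ estimate for $u$ near $\partial\Omega$: one freezes the coefficient $a(|\nabla u(x_0)|)$ at a boundary point, compares $u$ with the solution of the corresponding constant-coefficient problem, and uses the $\Delta_2 \cap \nabla_2$ structure (consequence of ${\rm H(a)_1}$) together with the Hölder continuity of $\nabla u$ already obtained in the interior to close a Morrey/Campanato iteration. Patching the interior estimate with the boundary estimate via a finite cover of $\overline{\Omega}$ yields $u \in C^{1,\tau}_0(\overline{\Omega})$.

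The main obstacle is the boundary analysis: near $\partial\Omega$ the right-hand side $g$ blows up like $d^{-\gamma}$, so one cannot apply Lieberman's theorem directly, and the whole point is that the $\Phi$-Laplacian is only locally comparable to a constant-coefficient $p$-Laplacian-type operator. Making the perturbation argument work uniformly up to the boundary — in particular controlling the constant-coefficient comparison solutions with the singular forcing term and ensuring the Campanato excess decay is not destroyed by the non-homogeneity of $\Phi$ — is the delicate part; everything else (the $L^\infty$ bound, the lower bound by $\underline{u}$, the interior $C^{1,\tau}$ estimate) is either already established in the excerpt or follows from standard references cited therein.
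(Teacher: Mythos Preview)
Your setup is correct through the estimate $0 \le g(x) \le A + B\,d(x)^{-\gamma}$, but the boundary argument diverges from the paper and, as written, has gaps. First, you invoke \cite{GST} for the model problem $-\Delta_\Phi w = d^{-\gamma}$; the results in \cite{GST,H} are stated for the Laplacian (or $p$-Laplacian), not for a general $\Phi$-Laplacian, so appealing to them here assumes essentially the regularity you are trying to establish. Second, the freezing scheme ``freeze $a(|\nabla u(x_0)|)$ at a boundary point and use the interior H\"older continuity of $\nabla u$ to close the iteration'' presupposes that $\nabla u$ is continuous up to $\partial\Omega$, which is the conclusion; interior $C^{1,\tau}$ regularity alone does not feed a boundary Campanato iteration in the presence of a right-hand side blowing up at $\partial\Omega$.

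The paper bypasses all of this with a short trick. After recording $0 \le \hat f(x,u(x)) \le C\,d(x)^{-\gamma}$, one solves the \emph{linear} problem
\[
-\Delta v = \hat f(x,u(x)) \ \text{in }\Omega,\qquad v=0 \ \text{on }\partial\Omega,
\]
and \cite[Lemma 3.1]{H} (which is precisely a result for the Laplacian with datum $\lesssim d^{-\gamma}$, $\gamma\in(0,1)$) yields $v\in C^{1,\tau}_0(\overline\Omega)$. Then one notes that $u$ solves
\[
-\Div\bigl(a(|\nabla w|)\nabla w - \nabla v(x)\bigr)=0 \ \text{in }\Omega,\qquad w=0 \ \text{on }\partial\Omega,
\]
since $-\Div(a(|\nabla u|)\nabla u)=\hat f(\cdot,u)=-\Delta v$. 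The vector field $A(x,\xi)=a(|\xi|)\xi-\nabla v(x)$ has H\"older---not singular---dependence on $x$, so Lieberman's theory \cite[Theorem 1.7]{L} applies directly and gives a unique solution $w\in C^{1,\tau}_0(\overline\Omega)$; by uniqueness $u=w$. In short, the ``Campanato--Giaquinta--Giusti perturbation'' mentioned in the introduction is not a freezing argument on the singular $\Phi$-Laplacian equation; it is the absorption of the singular right-hand side into $\nabla v$ for an auxiliary \emph{linear} problem, after which standard quasilinear regularity finishes the proof in one line.
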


\begin{proof}
Lemma \ref{moser} guarantees that $ u \in L^\infty(\Omega) $. In addition, Lemma \ref{equivalence} ensures that $u$ solves also \eqref{auxprob}. Using $u\in L^\infty(\Omega)$ and \eqref{fest} we get
\begin{equation*}
0 \leq \hat{f}(x,u(x)) \leq C d(x)^{-\gamma} \quad \forall x \in \Omega,
\end{equation*}
being $C>0$ sufficiently large. Let us consider the linear problem
\begin{equation}
\label{linear}
\left\{
\begin{alignedat}{2}
-\Delta v &= \hat{f}(x,u(x)) \quad &&\mbox{in} \;\; \Omega, \\
v &= 0 \quad &&\mbox{on} \;\; \partial \Omega.
\end{alignedat}
\right.
\end{equation}
Problem \eqref{linear} admits a unique solution $ v\in C^{1,\tau}_0(\overline{\Omega}) $, for some $ \tau \in (0,1] $, by virtue of Minty-Browder's theorem, Hardy's inequality, and \cite[Lemma 3.1]{H}. It turns out that the problem
\begin{equation}
\label{perturb}
\left\{
\begin{alignedat}{2}
-\Div (a(|\nabla w|)\nabla w-\nabla v(x)) &= 0 \quad &&\mbox{in} \;\; \Omega, \\
w &= 0 \quad &&\mbox{on} \;\; \partial \Omega,
\end{alignedat}
\right.
\end{equation}
admits a unique solution $w \in C^{1,\tau}_0(\overline{\Omega})$, by means of Minty-Browder's theorem and Lieberman's regularity theory, jointly with $\nabla v \in C^{0,\tau}(\overline{\Omega})$. Since $u$ is a solution to \eqref{perturb}, by uniqueness we get $u=w$, and thus $u \in C^{1,\tau}_0(\overline{\Omega})$.
\end{proof}

\section{Existence and multiplicity results}

In this last section we produce some results about \eqref{lambdaprob}. Lemma \ref{subsollemma} furnishes a sub-solution (depending on $\lambda$) to \eqref{lambdaprob}. Thus, taking into account Lemma \ref{equivalence}, the solutions to problem
\begin{equation}
\label{lambdaauxprob}
\tag{${\rm P}_{\lambda,\hat{f}}$}
\left\{
\begin{alignedat}{2}
-\Delta_\Phi u &= \lambda\hat{f}(x,u) \quad &&\mbox{in} \;\; \Omega, \\
u &> 0 \quad &&\mbox{in} \;\; \Omega, \\
u &= 0 \quad &&\mbox{on} \;\; \partial \Omega, 
\end{alignedat}
\right.
\end{equation}
are exactly the ones of \eqref{lambdaprob}. The energy functional associated with \eqref{lambdaauxprob} is
\begin{equation}
\label{Jlambda}
J_\lambda := H-\lambda K,
\end{equation}
being $H,K$ as in Lemmas \ref{princprop} and \ref{reactprop}, respectively. So the solutions to \eqref{lambdaauxprob} are the critical points of $J_\lambda$.

Existence of a solution is guaranteed by \cite[Theorem 2.1]{CGP}, that we report below. This result traces back to \cite{BC,Bo}.
\begin{thm}
\label{varprinc}
Let $X$ be a reflexive Banach space, $H:X \to \R$ and $K:X \to \R$ be two continuously G\^{a}teaux differentiable functionals such that $H$ is coercive and sequentially weakly lower semi-continuous, while $K$ is sequentially weakly upper semi-continuous with $\inf_X H = H(0) = K(0)$, and $r>0$. Then, for every
\begin{equation}
\label{ratio}
\lambda \in \left] 0,\frac{r}{\sup_{H^{-1}([0,r])} K} \right[,
\end{equation}
the functional $J_\lambda := H-\lambda K$ has a critical point $u_\lambda \in H^{-1}([0,r])$ satisfying $J_\lambda(u_\lambda) \leq J_\lambda(v)$ for all $v \in H^{-1}([0,r])$.
\end{thm}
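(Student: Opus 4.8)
The plan is to produce $u_\lambda$ as a global minimiser of $J_\lambda$ over the sublevel set
\[
S:=H^{-1}([0,r]),
\]
and then to show that, for $\lambda$ in the prescribed range, this minimiser actually lies in the open set $\{H<r\}$, so that it is an \emph{unconstrained} local minimiser of $J_\lambda$ and hence a critical point. (We normalise $\inf_X H=H(0)=K(0)=0$, as the notation $H^{-1}([0,r])$ suggests, so that $H\ge 0$ on $X$ and $0\in S$.) First I would check the ambient compactness: coercivity of $H$ makes $S$ bounded, sequential weak lower semi-continuity of $H$ makes $S$ sequentially weakly closed, and reflexivity of $X$ then guarantees that every sequence in $S$ admits a subsequence converging weakly to a point of $S$. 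Since $\lambda>0$ and $K$ is sequentially weakly upper semi-continuous, $J_\lambda=H-\lambda K$ is sequentially weakly lower semi-continuous; applying the direct method to a minimising sequence on the nonempty set $S$ yields $u_\lambda\in S$ with $J_\lambda(u_\lambda)\le J_\lambda(v)$ for every $v\in S$. This already establishes the last assertion of the statement.

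The decisive step is to promote the inclusion $u_\lambda\in S$ to the strict bound $H(u_\lambda)<r$. Comparing with $0\in S$ and using $H(0)=K(0)=0$ gives $H(u_\lambda)-\lambda K(u_\lambda)=J_\lambda(u_\lambda)\le J_\lambda(0)=0$, that is,
\[
H(u_\lambda)\le\lambda K(u_\lambda)\le\lambda\sup_{S}K.
\]
Here $\sup_{S}K$ is finite, since $K$ is sequentially weakly upper semi-continuous on the sequentially weakly compact set $S$, and $\sup_{S}K\ge K(0)=0$. If $\sup_{S}K=0$, then the displayed inequality together with $H\ge 0$ forces $H(u_\lambda)=0<r$; if $\sup_{S}K>0$, the hypothesis $\lambda<r/\sup_{S}K$ gives $H(u_\lambda)\le\lambda\sup_{S}K<r$. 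In both cases $H(u_\lambda)<r$.

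To conclude, I would use that a continuously G\^{a}teaux differentiable functional is norm-continuous (from $H(u)-H(u_\lambda)=\int_0^1\langle H'(u_\lambda+s(u-u_\lambda)),u-u_\lambda\rangle\ds$ and continuity of $H'$): hence $\{H<r\}$ is an open neighbourhood of $u_\lambda$ contained in $S$, on which $J_\lambda(u_\lambda)\le J_\lambda(v)$. Thus $u_\lambda$ is a local minimiser of the G\^{a}teaux differentiable functional $J_\lambda$ on the whole of $X$, so its G\^{a}teaux derivative vanishes there; i.e.\ $u_\lambda$ is a critical point of $J_\lambda$ lying in $H^{-1}([0,r])$, as claimed.

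The main obstacle is precisely the strict inequality $H(u_\lambda)<r$: were the constrained minimiser allowed to sit on the level set $\{H=r\}$, its constrained minimality would not transfer into a free critical point. This is the only place where the quantitative restriction $\lambda<r/\sup_{S}K$ is used, and it is the whole content of the theorem; everything else (weak compactness of $S$, lower semi-continuity of $J_\lambda$, continuity of $H$, and the vanishing of the derivative at a local minimum) is routine functional analysis.
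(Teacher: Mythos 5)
Your argument is correct, and it proves exactly the version of the theorem stated here: the direct method on $S=H^{-1}([0,r])$ (bounded by coercivity, sequentially weakly closed by the weak lower semi-continuity of $H$, hence sequentially weakly compact by reflexivity) gives the constrained minimiser $u_\lambda$, and the comparison $J_\lambda(u_\lambda)\leq J_\lambda(0)=0$ combined with $\lambda<r/\sup_S K$ yields the strict bound $H(u_\lambda)<r$, after which continuity of $H$ and G\^ateaux differentiability of $J_\lambda$ turn the constrained minimiser into a genuine critical point; you also correctly handle the degenerate case $\sup_S K=0$ and flag the (harmless) normalisation $\inf_X H=H(0)=K(0)=0$ implicit in the notation $H^{-1}([0,r])$. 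Note, however, that the paper does not prove this statement at all: it quotes it as \cite[Theorem 2.1]{CGP}, tracing back to \cite{BC,Bo}, where the result is obtained in a more general framework (Bonanno's local minimum theorem, proved via the Ekeland variational principle and, in \cite{BC}, in a non-smooth setting). Your route is therefore genuinely different: it is a self-contained, elementary direct-method argument whose key quantitative step is precisely the use of the parameter restriction to push the minimiser off the level set $\{H=r\}$, whereas the cited results buy additional generality (e.g.\ weaker structural assumptions and further conclusions on intervals of parameters and sequences of critical points) that the present statement does not need.
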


A second solution is furnished by the Mountain Pass theorem (vide, e.g., \cite[Theorem 5.40]{MMP}): the applicability of this result relies, in our context, on the Palais-Smale condition.
\begin{defi}[${\rm (PS)}$]
\label{PS}
Let $ X $ be a Banach space and $ J \in C^1(X) $. We say that $ J $ satisfies the Palais-Smale condition if any sequence $ \{u_n\} \subseteq X $ such that $ \{J(u_n)\} $ is bounded and $ \|J'(u_n)\|_{X^*} \to 0 $ admits a convergent subsequence.
\end{defi}
\begin{thm}
\label{mountainpass}
Suppose $ X $ to be a Banach space, and $ J \in C^1(X) $ satisfying $ {\rm (PS)} $. Let $ u_0, u_1 \in X $, and $ \rho > 0 $ such that
\begin{equation}
\label{mpgeometry}
\max\{J(u_0),J(u_1)\} \leq \inf_{\partial B(u_0,\rho)} J =: \eta_\rho, \quad \|u_1-u_0\|_X > \rho.
\end{equation}
Set
\[
\Gamma := \left\{ \gamma \in C^0([0,1];X): \, \gamma(0) = u_0, \, \gamma(1) = u_1 \right\}, \quad c := \inf_{\gamma \in \Gamma} \sup_{t \in [0,1]} J(\gamma(t)).
\]
Then $ c \geq \eta_\rho $ and there exists $ u \in X $ such that $ J(u) = c $ and $ J'(u) = 0 $. Moreover, if $ c = \eta_\rho $, then $ u $ can be taken on $ \partial B(u_0,\rho) $.
\end{thm}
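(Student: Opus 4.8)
The plan is to prove Theorem \ref{mountainpass} by the classical deformation argument: the inequality $c \ge \eta_\rho$ follows from a connectedness observation, while the existence of a critical point at level $c$ is obtained by contradiction, exploiting the Palais-Smale condition to run the quantitative deformation lemma. First I would record the trivialities: $\Gamma \ne \emptyset$, since the segment $\gamma_0(t) := (1-t)u_0 + tu_1$ joins $u_0$ to $u_1$, and $c < +\infty$ because $J$ is continuous on the compact set $\gamma_0([0,1])$; moreover $\eta_\rho \ge \max\{J(u_0),J(u_1)\} > -\infty$ by \eqref{mpgeometry}. For any $\gamma \in \Gamma$, the function $t \mapsto \|\gamma(t)-u_0\|_X$ is continuous, vanishes at $t=0$, and exceeds $\rho$ at $t=1$ (again by \eqref{mpgeometry}), hence equals $\rho$ at some $t^\ast \in (0,1)$; thus $\sup_{t\in[0,1]} J(\gamma(t)) \ge J(\gamma(t^\ast)) \ge \eta_\rho$, and taking the infimum over $\Gamma$ gives $c \ge \eta_\rho$.

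Set $K_c := \{u \in X : J(u) = c,\ J'(u) = 0\}$ and $J^a := \{u \in X : J(u) \le a\}$. The key analytic input is the quantitative deformation lemma, obtained by integrating the flow of a locally Lipschitz, suitably truncated pseudo-gradient vector field for $J$ (such a field exists because $J \in C^1(X)$, even though $X$ is merely Banach): if $\|J'(u)\|_{X^\ast} \ge \delta$ on a set of the form $\{|J-c|\le 2\epsilon\}\cap S_{2\delta_1}$, with $\epsilon$ small relative to $\delta$, one produces a continuous map $\sigma : X \to X$ that fixes every point outside $\{|J-c|\le 2\epsilon\}\cap S$, never increases $J$ along the flow, maps $J^{c+\epsilon}\cap S$ into $J^{c-\epsilon}$, and moves points by less than $2\delta_1$. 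The Palais-Smale condition enters here twice: it makes $K_c$ compact, and, whenever $K_c$ is disjoint from a closed set $S$, it guarantees a uniform lower bound $\|J'(u)\|_{X^\ast}\ge\delta$ on $\{|J-c|\le 2\epsilon\}\cap S_{2\delta_1}$ for $\epsilon,\delta_1$ small enough, since otherwise a sequence $u_n$ with $J(u_n)\to c$ and $\|J'(u_n)\|_{X^\ast}\to 0$ would, up to a subsequence, converge to a point of $K_c$ lying in $S$, a contradiction.

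Suppose first $c > \eta_\rho$ and, for contradiction, $K_c = \emptyset$. Applying the previous paragraph with $S = X$ gives, for $\epsilon > 0$ small enough and in particular $\epsilon < (c-\eta_\rho)/2$, a deformation $\sigma$ fixing $\{|J-c|\ge 2\epsilon\}$ and sending $J^{c+\epsilon}$ into $J^{c-\epsilon}$. Since $J(u_i)\le\eta_\rho < c-2\epsilon$ for $i=0,1$, the map $\sigma$ fixes $u_0$ and $u_1$. Choosing $\gamma\in\Gamma$ with $\sup_{t}J(\gamma(t)) < c+\epsilon$ (possible by definition of $c$) and setting $\tilde\gamma := \sigma\circ\gamma$, we obtain $\tilde\gamma\in\Gamma$ and $\sup_{t}J(\tilde\gamma(t))\le c-\epsilon < c$, contradicting $c = \inf_{\gamma\in\Gamma}\sup_{t}J(\gamma(t))$. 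Hence $K_c\ne\emptyset$, i.e. there exists a critical point at level $c$.

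The remaining case $c = \eta_\rho$ is the delicate one and yields the refined conclusion. Assume, for contradiction, $K_c\cap\partial B(u_0,\rho) = \emptyset$; since $K_c$ is compact and the sphere closed, $\mathrm{dist}(K_c,\partial B(u_0,\rho)) > 0$, so one can fix a small closed neighbourhood $S$ of $\partial B(u_0,\rho)$ that is disjoint from $K_c$ and contains neither $u_0$ (as $\mathrm{dist}(u_0,\partial B(u_0,\rho)) = \rho > 0$) nor $u_1$ (as $\mathrm{dist}(u_1,\partial B(u_0,\rho)) = \|u_1-u_0\|_X - \rho > 0$). With this $S$, the localized deformation $\sigma$ of the second paragraph fixes $u_0$ and $u_1$, sends $J^{c+\epsilon}\cap S$ into $J^{c-\epsilon}$, and displaces points by less than the width of $S$ about $\partial B(u_0,\rho)$. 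Pick $\gamma\in\Gamma$ with $\sup_{t}J(\gamma(t)) < c+\epsilon$ and put $\tilde\gamma := \sigma\circ\gamma\in\Gamma$. By the connectedness argument of the first paragraph, $\tilde\gamma(\tilde t^\ast)\in\partial B(u_0,\rho)$ for some $\tilde t^\ast$, whence $J(\tilde\gamma(\tilde t^\ast))\ge\eta_\rho = c$; on the other hand $\gamma(\tilde t^\ast)$ lies within the displacement distance of $\partial B(u_0,\rho)$, hence in $S$, and $J(\gamma(\tilde t^\ast)) < c+\epsilon$, so $\gamma(\tilde t^\ast)\in J^{c+\epsilon}\cap S$ and therefore $J(\tilde\gamma(\tilde t^\ast)) = J(\sigma(\gamma(\tilde t^\ast)))\le c-\epsilon < c$, a contradiction. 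Thus $K_c\cap\partial B(u_0,\rho)\ne\emptyset$, which in particular re-proves $K_c\ne\emptyset$ and exhibits a critical point on the sphere. I expect this last case to be the main obstacle: one must arrange the deformation to be localized near $\partial B(u_0,\rho)$ with a spatial displacement small enough that the crossing point of the deformed path with the sphere can be traced back to a point of the original path lying in the region where $J$ is genuinely pushed below $c$; the pseudo-gradient construction and the bookkeeping of the constants $\epsilon,\delta,\delta_1$ are otherwise routine, though they require some care in the non-Hilbert setting.
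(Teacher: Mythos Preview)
The paper does not prove Theorem \ref{mountainpass}; it is simply quoted from \cite[Theorem 5.40]{MMP} as a ready-made tool for the multiplicity argument in Theorem \ref{secondsol}, so there is no proof of the paper's to compare against. Your sketch is a correct outline of the classical argument via the quantitative deformation lemma and a pseudo-gradient flow: the connectedness argument gives $c\ge\eta_\rho$, the standard deformation handles $c>\eta_\rho$, and the localized deformation near $\partial B(u_0,\rho)$ (with controlled displacement so that the crossing point of the deformed path traces back into the region $S$ where $J$ is genuinely lowered) handles the boundary case $c=\eta_\rho$. This last refinement is precisely the content behind the ``$u$ can be taken on $\partial B(u_0,\rho)$'' clause and goes back to Pucci--Serrin and Ghoussoub; your identification of it as the main technical point is accurate.
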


The next theorem concerns the existence of a solution to \eqref{lambdaprob}.

\begin{thm}
\label{firstsol}
Suppose ${\rm H(a)_1}$--${\rm H(a)_2}$ and ${\rm H(f)_1}$-${\rm H(f)_2}$. Then there exists $\lambda^* \in (0,+\infty]$ such that, for all $\lambda \in (0,\lambda^*)$, problem \eqref{lambdaprob} admits a solution $u_\lambda \in C^{1,\tau}_0(\overline{\Omega})$, being $\tau \in (0,1]$ opportune. Moreover, there exists $r^*_\lambda>0$ (depending on $\lambda \in (0,\lambda^*)$) such that $\int_\Omega \Phi(|\nabla u_\lambda|) \dx < r^*_\lambda$.
\end{thm}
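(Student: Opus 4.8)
The strategy is to apply the abstract local minimum principle (Theorem \ref{varprinc}) with $X = W^{1,\Phi}_0(\Omega)$, $H$ as in \eqref{Hdef}, and $K$ as in Lemma \ref{reactprop}, and then transfer regularity via Theorem \ref{holder}. First I would verify the structural hypotheses of Theorem \ref{varprinc}: by Lemma \ref{princprop}, $H$ is coercive, sequentially weakly lower semi-continuous, and of class $C^1$; by Lemma \ref{reactprop}, $K$ is of class $C^1$ and weakly sequentially continuous, hence in particular sequentially weakly upper semi-continuous. The normalization $\inf_X H = H(0) = 0$ is immediate from $\Phi \geq 0$ and $\Phi(0)=0$, while $K(0) = \int_\Omega \hat F(x,0)\dx = 0$ by the definition of $\hat F$. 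Thus the abstract framework applies for any fixed $r>0$.

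Next I would produce the threshold $\lambda^*$. For fixed $r>0$, Theorem \ref{varprinc} yields a critical point $u_\lambda \in H^{-1}([0,r])$ of $J_\lambda = H - \lambda K$ whenever $\lambda < r / \sup_{H^{-1}([0,r])} K$. To obtain a uniform range, I would bound $\sup_{H^{-1}([0,r])} K$ using \eqref{Fest}: for $u \in H^{-1}([0,r])$, i.e.\ $\int_\Omega \Phi(|\nabla u|)\dx \leq r$, inequality \eqref{comparison} controls $\|u\|_{W^{1,\Phi}_0(\Omega)}$ by a constant depending only on $r$, hence (via the continuous embedding $W^{1,\Phi}_0(\Omega)\hookrightarrow L^\Upsilon(\Omega)$ and \eqref{comparison} applied to $\Upsilon$) also $\int_\Omega \Upsilon(|u|)\dx$; then $K(u) = \int_\Omega \hat F(x,u)\dx \leq C_1|\Omega| + C_2\int_\Omega \Upsilon(|u|)\dx \leq \sigma(r)$ for a suitable $\sigma(r) \in (0,+\infty)$. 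Setting $\lambda^* := \sup_{r>0} \, r/\sigma(r) \in (0,+\infty]$, for each $\lambda \in (0,\lambda^*)$ one can choose $r = r_\lambda > 0$ with $\lambda < r_\lambda/\sigma(r_\lambda) \leq r_\lambda/\sup_{H^{-1}([0,r_\lambda])}K$, so Theorem \ref{varprinc} applies and provides a critical point $u_\lambda$ of $J_\lambda$ with $H(u_\lambda) = \int_\Omega \Phi(|\nabla u_\lambda|)\dx \leq r_\lambda$.

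Then I would upgrade $u_\lambda$ to a genuine solution of \eqref{lambdaprob}. Since $u_\lambda$ is a critical point of $J_\lambda$, it is a weak solution of the truncated problem \eqref{lambdaauxprob}; by Lemma \ref{equivalence} (applied to $\lambda f$ in place of $f$, as noted at the start of Section 4) it satisfies $u_\lambda \geq \underline u > 0$ in $\Omega$ and solves \eqref{lambdaprob}. Regularity $u_\lambda \in C^{1,\tau}_0(\overline\Omega)$ follows from Theorem \ref{holder} (again with $\lambda f$ in place of $f$, the hypotheses ${\rm H(f)_1}$--${\rm H(f)_2}$ being stable under multiplication by $\lambda>0$). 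For the final claim, I would note that the inequality $H(u_\lambda) \leq r_\lambda$ is in principle non-strict; to get the strict bound $\int_\Omega \Phi(|\nabla u_\lambda|)\dx < r^*_\lambda$ it suffices to enlarge slightly, setting $r^*_\lambda := r_\lambda + 1$ (or, more carefully, to pick $r_\lambda$ so that $\lambda < r_\lambda/\sigma(r_\lambda)$ strictly and observe that critical points land in the \emph{interior} sublevel, a standard consequence of the proof of Theorem \ref{varprinc}).

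The main obstacle is the bookkeeping around the threshold: making sure $\lambda^*$ is well defined and positive requires knowing that $r \mapsto \sigma(r)$ grows subleadingly enough that $r/\sigma(r)$ does not collapse to $0$ as $r \to 0^+$ — but since $\Upsilon \in \Delta_2$ with $i_\Upsilon > 1$, one has $\sigma(r) = C_1|\Omega| + O(\overline\zeta_\Upsilon(\text{const}\cdot r^{1/i_\Phi}))$, so $\sigma(r) \to C_1|\Omega| > 0$ and $r/\sigma(r) \to 0$ as $r\to 0^+$; hence $\lambda^* > 0$ is recovered by taking the supremum over all $r>0$, not the limit. The secondary subtlety is ensuring that $u_\lambda$ lies in the \emph{open} sublevel set to justify the strict inequality, which is handled by the remark above. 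Everything else is a direct citation of the lemmas already proved.
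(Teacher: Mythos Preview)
Your proposal is correct and follows the same overall architecture as the paper: verify the hypotheses of Theorem \ref{varprinc} via Lemmas \ref{princprop} and \ref{reactprop}, bound $K$ on sublevels of $H$ through \eqref{Fest} and the embedding $W^{1,\Phi}_0(\Omega)\hookrightarrow L^\Upsilon(\Omega)$, then invoke Lemma \ref{equivalence} and Theorem \ref{holder}. The difference lies in how the threshold $\lambda^*$ is produced. You define $\lambda^* := \sup_{r>0} r/\sigma(r)$ abstractly, which is enough for the statement since $\sigma(r)<\infty$ for every $r>0$ gives $\lambda^*>0$ immediately. The paper instead splits into four cases according to whether $\Upsilon \ll \Phi$, $\Upsilon < \Phi$, $\Upsilon > \Phi$, or $\Upsilon \gg \Phi$, and in each case estimates $\kappa(r) \geq \sup_{H\leq r} K / r$ explicitly (using Poincar\'e's inequality and \eqref{factor} in the first two cases, and the chain \eqref{comparison} $+$ embedding in the last). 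This buys concrete values of $\lambda^*$ and $r^*_\lambda$, and in particular shows $\lambda^* = +\infty$ when $\Upsilon \ll \Phi$, information your streamlined route does not recover. Your handling of the strict inequality $H(u_\lambda) < r^*_\lambda$ (by enlarging $r_\lambda$ or by noting the minimizer lies in the open sublevel when $\lambda$ is strictly below the ratio) matches what the paper does implicitly. One cosmetic point: your discussion of the ``main obstacle'' is slightly tangled---the positivity of $\lambda^*$ does not hinge on the behavior of $r/\sigma(r)$ as $r\to 0^+$ at all, only on $\sigma(r)<\infty$ for a single $r$.
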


\begin{proof}
We want to apply Theorem \ref{varprinc} to $J_\lambda$ (see \eqref{Jlambda}). As observed above, this theorem furnishes a solution $u_\lambda \in W^{1,\Phi}_0(\Omega)$ to \eqref{lambdaprob}. Then the regularity of $u_\lambda$ is a consequence of Theorem \ref{holder}.

In order to bound from above the ratio
\begin{equation*}
\frac{\sup_{H^{-1}([0,r])} K}{r},
\end{equation*}
appearing in \eqref{ratio}, we exploit \eqref{Fest} and estimate $K$ as follows:
\begin{equation}
\label{boundest}
|K(u)| \leq \int_\Omega |\hat{F}(x,u)| \dx \leq C_1|\Omega| + C_2\int_\Omega \Upsilon(|u|) \dx.
\end{equation}
Thus, we are led to study the function $\kappa:(0,+\infty) \to (0,+\infty)$ defined as
\begin{equation*}
\kappa(r) := \frac{C_1 |\Omega|}{r} + \frac{C_2}{r} \sup \left\{\int_\Omega \Upsilon(|u|) \dx: \, \int_\Omega \Phi(|\nabla u|) \dx \leq r \right\}.
\end{equation*}
Notice that $\kappa \to +\infty$ as $r \to 0^+$. Now we distinguish four cases, depending whether $\Upsilon \ll \Phi$, $\Upsilon < \Phi$, $\Upsilon > \Phi$, or $\Upsilon \gg \Phi $. Clearly, some cases overlap.

\noindent
\textit{First case}: $\Upsilon \ll \Phi$. \\
Fix an arbitrary $\epsilon \in (0,1]$. There exists $M_\epsilon>0$ such that
\begin{equation*}
\Upsilon(t) \leq \Phi(\epsilon t) \leq \epsilon \Phi(t) \quad \forall t > M_\epsilon.
\end{equation*}
Hence, by Poincaré's inequality \cite[p.8]{CGSS} and \eqref{factor}, we get
\begin{equation*}
\begin{split}
\int_\Omega \Upsilon(|u|) \dx &= \int_{\Omega \cap \{|u| \leq M_\epsilon\}} \Upsilon(|u|) \dx + \int_{\Omega \cap \{|u| > M_\epsilon\}} \Upsilon(|u|) \dx \\
&\leq \Upsilon(M_\epsilon)|\Omega| + \epsilon \int_\Omega \Phi(|u|) \dx \\
&\leq \Upsilon(M_\epsilon)|\Omega| + \epsilon \overline{\zeta}_\Phi(2d_\Omega) \int_\Omega \Phi(|\nabla u|) \dx \\
&\leq \Upsilon(M_\epsilon)|\Omega| + \epsilon \overline{\zeta}_\Phi(2d_\Omega) r.
\end{split}
\end{equation*}
Thus $\kappa$ can be estimated as
\begin{equation}
\label{kappafirst}
\kappa(r) \leq \frac{(C_1+C_2\Upsilon(M_\epsilon))|\Omega|}{r} + C_2 \overline{\zeta}_\Phi(2d_\Omega)\epsilon.
\end{equation}
Notice that the right-hand side of \eqref{kappafirst} is decreasing in $r$. Moreover, $\kappa(r) \to 0$ as $r \to +\infty$: indeed, letting $r \to +\infty$ in \eqref{kappafirst} reveals that
\begin{equation*}
\limsup_{r \to +\infty} \kappa(r) \leq C_2\overline{\zeta}_\Phi(2d_\Omega)\epsilon \quad \forall \epsilon \in (0,1],
\end{equation*}
since $\epsilon$ was arbitrary. We set $\lambda^*=+\infty$. Then, for any $\lambda>0$, we choose $\epsilon = \min\{1,(2C_2\overline{\zeta}_\Phi(2d_\Omega)\lambda)^{-1}\}$ and $r^*_\lambda>2\lambda(C_1+C_2\Upsilon(M_\epsilon))|\Omega|$. According to \eqref{kappafirst}, these choices guarantee $\kappa(r^*_\lambda) < \lambda^{-1}$, which allows to apply Theorem \ref{varprinc} with $r=r^*_\lambda$.

\noindent
\textit{Second case}: $\Upsilon < \Phi$. \\
There exist $ M,c>0$ such that
\begin{equation*}
\Upsilon(t) \leq \Phi(ct) \quad \forall t>M.
\end{equation*}
Reasoning as in the first case we have
\begin{equation*}
\begin{split}
\int_\Omega \Upsilon(|u|) \dx &\leq \Upsilon(M)|\Omega| + \int_\Omega \Phi(c|u|) \dx \leq \Upsilon(M)|\Omega| + \overline{\zeta}_\Phi(2cd_\Omega) \int_\Omega \Phi(|\nabla u|) \dx \\
&\leq \Upsilon(M)|\Omega| + \overline{\zeta}_\Phi(2cd_\Omega)r.
\end{split}
\end{equation*}
In this case $\kappa$ can be estimated as
\begin{equation}
\label{kappatwo}
\kappa(r) \leq \frac{(C_1+C_2\Upsilon(M))|\Omega|}{r} + C_2 \overline{\zeta}_\Phi(2cd_\Omega).
\end{equation}
We observe that the right-hand side of \eqref{kappatwo} is decreasing in $r$ and
\begin{equation*}
\limsup_{r \to +\infty} \kappa(r) \leq C_2 \overline{\zeta}_\Phi(2cd_\Omega).
\end{equation*}
We set $\lambda^* = (C_2 \overline{\zeta}_\Phi(2cd_\Omega))^{-1}$ and, for any $\lambda \in (0,\lambda^*)$, we take $r^*_\lambda > \frac{(C_1+C_2\Upsilon(M))|\Omega|}{\lambda^{-1}-C_2 \overline{\zeta}_\Phi(2cd_\Omega)}$. Then one applies Theorem \ref{varprinc}.

\noindent
\textit{Third case}: $\Upsilon > \Phi$. \\
Loosing information but not generality, we can reduce to the next case, namely, $\Upsilon \gg \Phi$. Indeed, in place of $\Upsilon$ in \eqref{Fest}, we can consider the intermediate function\footnote{Our definition should not be confused with the one in \cite[Definition 6.3.1]{RR}.} $\hat{\Upsilon} := \sqrt{\Upsilon \Phi_*}$.

Firstly, we notice that $\Upsilon \ll \Phi_*$ implies
\begin{equation*}
\Upsilon(t) \leq \Phi_*(t) \quad \forall t>M,
\end{equation*}
being $M>0$ opportune. So \eqref{Fest} can be re-written as
\begin{equation*}
|\hat{F}(x,s)| \leq C_1 + C_2 \Upsilon(M) + C_2 \hat{\Upsilon}(|s|) =: \hat{C}_1 + \hat{C}_2 \hat{\Upsilon}(|s|).
\end{equation*}

Secondly, it is readily seen that $\Upsilon \ll \hat{\Upsilon} \ll \Phi_*$ since, for any fixed $\eta>0$, by \eqref{factor} we have
\begin{equation*}
\frac{\hat{\Upsilon}(\eta t)}{\Upsilon(t)} = \sqrt{\frac{\Upsilon(\eta t)}{\Upsilon(t)}} \sqrt{\frac{\Phi_*(\eta t)}{\Upsilon(t)}} \geq \sqrt{\underline{\zeta}_\Upsilon(\eta)} \sqrt{\frac{\Phi_*(\eta t)}{\Upsilon(t)}} \xrightarrow{t \to +\infty} +\infty
\end{equation*}
and
\begin{equation*}
\frac{\Phi_*(\eta t)}{\hat{\Upsilon}(t)} = \sqrt{\frac{\Phi_*(\eta t)}{\Phi_*(t)}} \sqrt{\frac{\Phi_*(\eta t)}{\Upsilon(t)}} \geq \sqrt{\underline{\zeta}_{\Phi_*}(\eta)} \sqrt{\frac{\Phi_*(\eta t)}{\Upsilon(t)}} \xrightarrow{t \to +\infty} +\infty.
\end{equation*}
In particular we get $\Phi \ll \hat{\Upsilon}$.

Finally, we notice the $i_{\hat{\Upsilon}},s_{\hat{\Upsilon}}$ are well defined as in \eqref{indices}: indeed,
\begin{equation*}
\frac{t\hat{\Upsilon}'(t)}{\hat{\Upsilon}(t)} = \frac{1}{2} \frac{t\Upsilon'(t)}{\Upsilon(t)} + \frac{1}{2} \frac{t\Phi_*'(t)}{\Phi_*(t)} \quad \forall t>0.
\end{equation*}
We deduce $\frac{i_\Upsilon+i_{\Phi_*}}{2} \leq i_{\hat{\Upsilon}} \leq s_{\hat{\Upsilon}} \leq \frac{s_\Upsilon+s_{\Phi_*}}{2}$.

\noindent
\textit{Fourth case}: $\Upsilon \gg \Phi$. \\
Thanks to \eqref{comparison} and the embedding $W^{1,\Phi}_0(\Omega) \hookrightarrow L^\Upsilon(\Omega)$ we obtain
\begin{equation*}
\begin{split}
\int_\Omega \Upsilon(|u|) \dx &\leq \overline{\zeta}_\Upsilon(\|u\|_{L^\Upsilon(\Omega)}) \leq \overline{\zeta}_\Upsilon(k\|\nabla u\|_{L^\Phi(\Omega)}) \leq \overline{\zeta}_\Upsilon(k) \overline{\zeta}_\Upsilon(\|\nabla u\|_{L^\Phi(\Omega)}) \\
&= \overline{\zeta}_\Upsilon(k) \overline{\zeta}_\Upsilon(\underline{\zeta}_\Phi^{-1}(\underline{\zeta}_\Phi(\|\nabla u\|_{L^\Phi(\Omega)}))) \leq \overline{\zeta}_\Upsilon(k) \overline{\zeta}_\Upsilon \left(\underline{\zeta}_\Phi^{-1} \left(\int_\Omega \Phi(|\nabla u|) \dx \right) \right) \\
&\leq \overline{\zeta}_\Upsilon(k) \overline{\zeta}_\Upsilon(\underline{\zeta}_\Phi^{-1}(r)) \leq \overline{\zeta}_\Upsilon(k) (1 +  r^{\frac{s_\Upsilon}{i_\Phi}}),
\end{split}
\end{equation*}
where $k>0$ is the best constant of the embedding mentioned above. So $\kappa$ is estimated as
\begin{equation}
\label{kappafour}
\kappa(r) \leq \frac{C_1 |\Omega| + C_2 \overline{\zeta}_\Upsilon(k)}{r} + C_2 \overline{\zeta}_\Upsilon(k) r^{\frac{s_\Upsilon}{i_\Phi}-1}.
\end{equation}
We observe that $\Upsilon \gg \Phi$ implies $s_\Upsilon > i_\Phi$; otherwise we have
\begin{equation*}
\frac{\Upsilon'(s)}{\Upsilon(s)} \leq \frac{\Phi'(s)}{\Phi(s)} \quad \forall s \in (0,+\infty)
\end{equation*}
whence, integrating in $[1,t]$, $t>1$, and passing to the exponential,
\begin{equation*}
\Upsilon(t) \leq \frac{\Upsilon(1)}{\Phi(1)} \Phi(t) \quad \forall t \in (1,+\infty),
\end{equation*}
in contrast with $\Upsilon \gg \Phi$. Hence the right-hand side of \eqref{kappafour}, which can be re-written as
\begin{equation*}
\hat{k}(r) := \frac{A}{r}+Br^{\theta}, \quad \mbox{with} \quad A := C_1 |\Omega| + C_2\overline{\zeta}_\Upsilon(k), \;\; B:= C_2 \overline{\zeta}_\Upsilon(k), \;\; \theta := \frac{s_\Upsilon}{i_\Phi}-1 > 0,
\end{equation*}
diverges when $r \to +\infty$. Computing the unique critical point of $\hat{k}$ reveals that
\begin{equation*}
\min_{r>0} \hat{k}(r) = \hat{k} \left( \left( \frac{A}{\theta B} \right)^{\frac{1}{\theta+1}} \right) = \left[A^\theta B (\theta+\theta^{-\theta})\right]^{\frac{1}{\theta+1}}.
\end{equation*}
In this case we set $\lambda^* := \left[A^\theta B (\theta+\theta^{-\theta})\right]^{-\frac{1}{\theta+1}}$, $r^*_\lambda := \left( \frac{A}{\theta B} \right)^{\frac{1}{\theta+1}}$ and apply Theorem \ref{varprinc}.
\end{proof}

\begin{rmk}
\label{locmin}
According to \eqref{comparison} and Theorem \ref{firstsol}, we infer
\begin{equation*}
\underline{\zeta}_\Phi(\|u_\lambda\|_{W^{1,\Phi}_0(\Omega)}) \leq \int_\Omega \Phi(|\nabla u_\lambda|) \dx < r^*_\lambda.
\end{equation*}
We define the ball
\begin{equation*}
B_\lambda := \{u \in W^{1,\Phi}_0(\Omega): \, \|u\|_{W^{1,\Phi}_0(\Omega)} < \underline{\zeta}_\Phi^{-1}(r^*_\lambda)\}.
\end{equation*}
Taking into account Theorem \ref{firstsol} again, we deduce that $u_\lambda$ is a minimizer for the restriction of $J_\lambda$ to $\overline{B}_\lambda$; in particular, $u_\lambda$ is a local minimizer for $J_\lambda$. Incidentally, we stress the fact that this local minimizer has been provided without using any $W^{1,\Phi}$ versus $C^1$ local minimizer argument.
\end{rmk}

\begin{lemma}
\label{palaissmale}
Under ${\rm H(a)_1}$--${\rm H(a)_2}$ and ${\rm H(f)_1}$--${\rm H(f)_3}$, the functional $J_\lambda$ in \eqref{Jlambda} satisfies the Palais-Smale condition and is unbounded from below.
\end{lemma}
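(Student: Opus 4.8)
The plan is to verify the two assertions separately, both relying on the Ambrosetti--Rabinowitz condition ${\rm H(f)_3}$ and the algebraic bounds between $H$, $\Phi$, $\phi$ collected in Lemma \ref{princprop} and formulas \eqref{factor}--\eqref{comparison}.

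\medskip
\textbf{Unboundedness from below.} First I would derive a power-type lower bound for $\hat F$ from \eqref{AR+}. Fixing $x$ and integrating the differential inequality $\mu \hat F(x,t) \le t\hat f(x,t) = t\partial_t \hat F(x,t)$ for $t \ge R$ (note $\hat f = f$ for $s > \underline u$, and $R > \delta \ge \|\underline u\|_\infty$ by Remark \ref{deltaR}, so the truncation plays no role for $t \ge R$) gives $\hat F(x,t) \ge c(x) t^\mu$ on $t \ge R$, with $c(x) = \hat F(x,R) R^{-\mu}$ bounded away from $0$ uniformly — here one must be slightly careful and instead work with $F(x,t) = \int_R^t f$, using $\hat F(x,t) = \hat F(x,R) + F(x,t) \ge F(x,t)$, and then $F(x,t) \ge c_0 t^\mu - c_1$ for suitable constants independent of $x$ (absorbing the possibly small/zero value of $\hat F(x,R)$ into the additive constant and using $f \ge 0$). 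Then, picking any $u_0 \in W^{1,\Phi}_0(\Omega)$ with $u_0 > 0$ on a set of positive measure, I evaluate $J_\lambda(su_0) = H(su_0) - \lambda K(su_0)$ for $s \to +\infty$: by \eqref{factor}, $H(su_0) \le \overline\zeta_\Phi(s) H(u_0) \le s^{s_\Phi} H(u_0)$ for $s \ge 1$, while $K(su_0) = \int_\Omega \hat F(x,su_0)\dx \ge c_0 s^\mu \int_{\{u_0>0\}} u_0^\mu \dx - c_1|\Omega|$. Since $\mu > s_\Phi$, the negative term dominates and $J_\lambda(su_0) \to -\infty$.

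\medskip
\textbf{Palais--Smale.} Let $\{u_n\}$ satisfy $|J_\lambda(u_n)| \le C$ and $\|J_\lambda'(u_n)\|_* \to 0$. The first step is the standard Ambrosetti--Rabinowitz boundedness argument: compute $J_\lambda(u_n) - \tfrac1\mu \langle J_\lambda'(u_n), u_n\rangle$. Using $\langle H'(u), u\rangle = \int_\Omega \phi(|\nabla u|)|\nabla u|\dx \le s_\Phi H(u)$ and $\ge i_\Phi H(u)$ (from \eqref{indices} applied to $\Phi$), the $H$-contribution is bounded below by $(1 - s_\Phi/\mu) H(u_n)$, which is a \emph{positive} multiple of $H(u_n)$ because $\mu > s_\Phi$; the $K$-contribution is $\lambda \int_\Omega \big(\tfrac1\mu u_n \hat f(x,u_n) - \hat F(x,u_n)\big)\dx$, which by ${\rm H(f)_3}$ is bounded below on $\{u_n \ge R\}$ by $0$ and on $\{u_n < R\}$ by a constant times $|\Omega|$ (again $R$ large uniform bound on the relevant integrand, since $\hat f(x,s) \le c_1\overline\Upsilon^{-1}(\Upsilon(R)) + \alpha d^{-\gamma} + \beta$ is integrable by Hardy). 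Hence $(1 - s_\Phi/\mu)H(u_n) \le C + \tfrac1\mu \|J_\lambda'(u_n)\|_* \|u_n\|_{W^{1,\Phi}_0(\Omega)} + C'$, and since $\|J_\lambda'(u_n)\|_* \to 0$ while $H(u_n) \ge i_\Phi \underline\zeta_\Phi(\|u_n\|_{W^{1,\Phi}_0(\Omega)})$ grows superlinearly (as $i_\Phi > 1$), this forces $\{u_n\}$ bounded in $W^{1,\Phi}_0(\Omega)$. By reflexivity, up to a subsequence $u_n \rightharpoonup u$. Since $K'$ is completely continuous (Lemma \ref{reactprop}), $K'(u_n) \to K'(u)$ strongly; combined with $J_\lambda'(u_n) = H'(u_n) - \lambda K'(u_n) \to 0$, we get $H'(u_n) = -\Delta_\Phi u_n \to \lambda K'(u)$ strongly in $W^{-1,\overline\Phi}(\Omega)$. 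Then $\langle -\Delta_\Phi u_n - (-\Delta_\Phi u), u_n - u\rangle = \langle -\Delta_\Phi u_n, u_n-u\rangle - \langle -\Delta_\Phi u, u_n - u\rangle \to 0$, the first term because $-\Delta_\Phi u_n$ converges strongly and $u_n - u$ is bounded, the second by weak convergence. The ${\rm (S_+)}$ property of $-\Delta_\Phi$ (Lemma \ref{princprop}) then yields $u_n \to u$ strongly, which is the Palais--Smale condition.

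\medskip
The main obstacle, I expect, is making the Ambrosetti--Rabinowitz estimate genuinely uniform in $x$ despite the truncation defining $\hat f$ and the singular term: one needs that the quantity $\tfrac1\mu s\hat f(x,s) - \hat F(x,s)$ is bounded below by an $L^1(\Omega)$ function independent of $n$, and this requires combining ${\rm H(f)_3}$ (which governs $s \ge R$) with the growth bound \eqref{fest} and Hardy's inequality (to control $\int_\Omega d^{-\gamma}$) on the region $\underline u(x) \le s < R$, plus the trivial bound on $\{s \le \underline u(x)\}$ where $\hat f(x,s) = f(x,\underline u(x))$. Once that lower bound is in place the rest is routine given the machinery already developed in Section 2.
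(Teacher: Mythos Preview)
Your unboundedness argument is essentially the paper's, and the concluding passage of the Palais--Smale proof (reflexivity, complete continuity of $K'$, the ${\rm (S_+)}$ property) is fine. The gap is in the boundedness step.

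You estimate $J_\lambda(u_n)-\tfrac1\mu\langle J_\lambda'(u_n),u_n\rangle$ and claim that the $K$-contribution $\int_\Omega\bigl(\tfrac1\mu u_n\hat f(x,u_n)-\hat F(x,u_n)\bigr)\dx$ is bounded below, splitting into $\{u_n\ge R\}$ and $\{u_n<R\}$. But $\{u_n<R\}$ contains $\{u_n<-R\}$, and there your pointwise bound $\hat f(x,s)\le c_1\overline\Upsilon^{-1}(\Upsilon(R))+\alpha d^{-\gamma}+\beta$ fails: that estimate comes from \eqref{fest} and is valid only for $|s|\le R$, not for $s<R$. Since $\hat f$ is even and $\hat F$ is odd, on $\{u_n<-R\}$ the integrand equals $\hat F(x,|u_n|)-\tfrac1\mu|u_n|\hat f(x,|u_n|)$, and the Ambrosetti--Rabinowitz inequality gives the \emph{wrong} sign here: it yields an upper bound $\hat F(x,R)$ but no lower bound. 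For instance if $f(x,s)\sim s^{r-1}$ with $r>\mu$, then $\tfrac1\mu sf-F\sim(\tfrac1\mu-\tfrac1r)s^r\to+\infty$, so on $\{u_n<-R\}$ the integrand goes to $-\infty$. Your inequality $(1-s_\Phi/\mu)H(u_n)\le C+\tfrac1\mu\|J_\lambda'(u_n)\|_*\|u_n\|+C'$ therefore does not follow.

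The paper closes this gap by treating $u_n^-$ separately \emph{before} invoking ${\rm H(f)_3}$: testing $J_\lambda'(u_n)$ against $-u_n^-$ and using only $\hat f\ge 0$ gives $i_\Phi\,\underline\zeta_\Phi(\|u_n^-\|)\le\|u_n^-\|$, so $\{u_n^-\}$ is bounded outright. Then the Ambrosetti--Rabinowitz combination is applied with $u_n^+$ in place of $u_n$ (testing with $v=u_n^+$ and using $\hat F(x,u_n)\le\hat F(x,u_n^+)$), where the sign problem does not arise. Once you insert this splitting, the rest of your outline goes through.
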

\begin{proof}
Let $\{u_n\} \subseteq W^{1,\Phi}_0(\Omega)$ be such that $\{J_\lambda(u_n)\}$ is bounded and $\|J'_\lambda(u_n)\|_{W^{-1,\overline{\Phi}}(\Omega)} \to 0$ as $n\to\infty$. Hence, for a suitable $c>0$, up to subsequences we have
\begin{equation}
\label{PS1}
\int_\Omega \Phi(|\nabla u_n|) \dx -\lambda \int_\Omega \hat{F}(x,u_n) \dx \leq c
\end{equation}
and
\begin{equation}
\label{PS2}
\left| \int_\Omega a(|\nabla u_n|)\nabla u_n \cdot \nabla v \dx -\lambda \int_\Omega \hat{f}(x,u_n)v \dx \right| \leq \|\nabla v\|_{L^\Phi(\Omega)}
\end{equation}
for all $n \in \N$ and $v \in W^{1,\Phi}_0(\Omega)$. We prove that $\{u_n\}$ is bounded in $W^{1,\Phi}_0(\Omega)$ by showing the boundedness of $\{u_n^-\}$ and $\{u_n^+\}$.

Choosing $v=-u_n^-$ in \eqref{PS2} and using \eqref{coerc} yield
\begin{equation*}
\begin{split}
&i_\Phi \underline{\zeta}_\Phi(\|\nabla u_n^-\|_{L^\Phi(\Omega)}) \leq \int_\Omega a(|\nabla u_n^-|)|\nabla u_n^-|^2 \dx \\
&\leq \int_\Omega a(|\nabla u_n^-|)|\nabla u_n^-|^2 \dx + \lambda \int_\Omega \hat{f}(x,u_n) u_n^- \dx \leq \|\nabla u_n^-\|_{L^\Phi(\Omega)},
\end{split}
\end{equation*}
whence $\{u_n^-\}$ is bounded in $W^{1,\Phi}_0(\Omega)$.

Exploiting \eqref{compact} and ${\rm H(f)_3}$, besides Remark \ref{deltaR}, we have
\begin{equation}
\label{FestAR}
\begin{split}
\int_\Omega \hat{F}(x,u_n^+) \dx &= \int_{\Omega\cap\{u_n^+\leq R\}} \hat{F}(x,u_n^+) \dx + \int_{\Omega\cap\{u_n^+>R\}} \left( \hat{F}(x,R) + F(x,u_n^+) \right) \dx \\
&\leq 2\Pi(R) + \int_{\Omega\cap\{u_n^+>R\}} F(x,u_n^+) \dx \\
&\leq 2\Pi(R) + \frac{1}{\mu} \int_{\Omega\cap\{u_n^+>R\}} f(x,u_n^+) u_n^+ \dx \\
&\leq 2\Pi(R) + \frac{1}{\mu} \int_\Omega \hat{f}(x,u_n^+) u_n^+ \dx.
\end{split}
\end{equation}
By \eqref{PS1} and \eqref{FestAR} we deduce
\begin{equation}
\label{PS1rev}
\begin{split}
\int_\Omega \Phi(|\nabla u_n^+|) \dx &\leq \int_\Omega \Phi(|\nabla u_n|) \dx \leq c + \lambda \int_\Omega \hat{F}(x,u_n) \dx \leq c + \lambda \int_\Omega \hat{F}(x,u_n^+) \dx \\
&\leq c + 2\lambda\Pi(R) + \frac{\lambda}{\mu} \int_\Omega \hat{f}(x,u_n^+) u_n^+ \dx.
\end{split}
\end{equation}
On the other hand, choosing $v=u_n^+$ in \eqref{PS2} produces
\begin{equation}
\label{PS2rev}
\begin{split}
\lambda \int_\Omega \hat{f}(x,u_n^+) u_n^+ \dx &\leq \|\nabla u_n^+\|_{L^\Phi(\Omega)} + \int_\Omega \phi(|\nabla u_n^+|) |\nabla u_n^+| \dx \\
&\leq \|\nabla u_n^+\|_{L^\Phi(\Omega)} + s_\Phi \int_\Omega \Phi(|\nabla u_n^+|) \dx.
\end{split}
\end{equation}
Combining \eqref{PS1rev}--\eqref{PS2rev} and rearranging the terms we obtain
\begin{equation*}
\left(1-\frac{s_\Phi}{\mu}\right) \int_\Omega \Phi(|\nabla u_n^+|) \dx \leq c + 2\lambda\Pi(R) + \frac{1}{\mu} \|\nabla u_n^+\|_{L^\Phi(\Omega)}.
\end{equation*}
According to ${\rm H(f)_3}$ and \eqref{comparison}, it turns out that $\{u_n^+\}$ is bounded in $W^{1,\Phi}_0(\Omega)$. The ${\rm (S_+)}$ property of $H'$ (see Lemma \ref{princprop}) and the compactness of $K'$ (see Lemma \ref{reactprop}) ensure the Palais-Smale condition for $J_\lambda$; see \cite[Lemma 3.1]{CGP} for details.

Now we prove that $J_\lambda$ is unbounded from below. Firstly, fix any $\overline{R}>R$. Integrating \eqref{AR+} in $(\overline{R},t)$, $t>\overline{R}$, and passing to the exponential yield
\begin{equation}
\label{FgrowthAR}
F(x,t) \geq \frac{F(x,\overline{R})}{\overline{R}^\mu} t^\mu =: c_{\overline{R}} t^\mu \quad \forall (x,t) \in \Omega \times [\overline{R},+\infty).
\end{equation}
Take any test function $u_0 \in C^\infty_c(\Omega)$ such that $u_0 \geq 0$ in $\Omega$ and $u_0 \not\equiv 0$. Then there exists a compact $K \subseteq \Omega$ such that
\begin{equation}
\label{u0props}
\min_K u_0 > 0 \quad \mbox{and} \quad |K|>0.
\end{equation}
For any $M>0$, set $K_M := \{x \in \Omega: \, Mu_0 > \overline{R}\}$. Observe that $\{K_M\}_{M>0}$ is increasing and $K \subseteq K_M$ for large values of $M$. Using \eqref{FgrowthAR} and \eqref{factor}, besides recalling Remark \ref{deltaR}, for $M$ large we get
\begin{equation}
\label{unbounded}
\begin{split}
J_\lambda(Mu_0) &\leq \int_\Omega \Phi(M|\nabla u_0|) \dx - \lambda \int_{K_M} F(x,Mu_0) \dx \\
&\leq \int_\Omega \Phi(M|\nabla u_0|) \dx - \lambda c_{\overline{R}} M^\mu \int_{K_M} u_0^\mu \dx \\
&\leq M^{s_\Phi} \int_\Omega \Phi(|\nabla u_0|) \dx - \lambda c_{\overline{R}} M^\mu \int_K u_0^\mu \dx.
\end{split}
\end{equation}
By ${\rm H(f)_3}$ we have $\mu > s_\Phi$, while \eqref{u0props} ensures that $\int_K u_0^\mu \dx>0$. Hence $J_\lambda(Mu_0) \to -\infty$ when $M \to +\infty$, as desired.
\end{proof}

\begin{rmk}
Incidentally, we notice that \eqref{FgrowthAR} implies that $J_\lambda$ is $\Phi$-super-linear, since $\Phi<t^{\mu}$ in the sense of \eqref{Youngcomp}.
\end{rmk}

\begin{thm}
\label{secondsol}
Suppose ${\rm H(a)_1}$--${\rm H(a)_2}$ and ${\rm H(f)_1}$--${\rm H(f)_3}$. Then problem \eqref{lambdaprob} admits two distinct solutions in $C^{1,\tau}_0(\overline{\Omega})$.
\end{thm}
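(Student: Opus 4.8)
The plan is to combine the two ingredients already assembled: the local minimizer $u_\lambda$ from Theorem \ref{firstsol} (valid for $\lambda \in (0,\lambda^*)$), and the Palais--Smale property plus unboundedness from below of $J_\lambda$ from Lemma \ref{palaissmale}. The idea is to apply the Mountain Pass theorem (Theorem \ref{mountainpass}) in the Banach space $X = W^{1,\Phi}_0(\Omega)$ with base point $u_0 = u_\lambda$, and to show that the mountain pass critical point is distinct from $u_\lambda$.

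\textbf{Step 1: Mountain pass geometry around $u_\lambda$.} By Remark \ref{locmin}, $u_\lambda$ is a local minimizer of $J_\lambda$; hence there exists $\rho_0 > 0$ such that $J_\lambda(u_\lambda) \le J_\lambda(v)$ for all $v \in \overline{B}(u_\lambda, \rho_0)$. I would argue that, after possibly shrinking $\rho_0$, one actually has the strict separation $J_\lambda(u_\lambda) < \inf_{\partial B(u_\lambda,\rho)} J_\lambda =: \eta_\rho$ for all $\rho \in (0,\rho_0]$, or at least handle the degenerate case $J_\lambda(u_\lambda) = \eta_\rho$ separately. (The standard trick: if $J_\lambda(u_\lambda) = \eta_\rho$ for some small $\rho$, then the last clause of Theorem \ref{mountainpass} produces a critical point on $\partial B(u_\lambda,\rho)$, which is already distinct from $u_\lambda$; otherwise the strict inequality holds and we proceed.) For the second endpoint of the mountain pass, Lemma \ref{palaissmale} gives $J_\lambda$ unbounded from below, so I can pick $u_1 = M u_0$ for a fixed admissible $u_0 \in C^\infty_c(\Omega)$, $M$ large, so that $\|u_1 - u_\lambda\|_{W^{1,\Phi}_0(\Omega)} > \rho$ and $J_\lambda(u_1) < J_\lambda(u_\lambda) \le \eta_\rho$. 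Thus \eqref{mpgeometry} holds with $u_0 = u_\lambda$.

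\textbf{Step 2: Apply the Mountain Pass theorem.} Since $J_\lambda = H - \lambda K \in C^1(W^{1,\Phi}_0(\Omega))$ by Lemmas \ref{princprop} and \ref{reactprop}, and satisfies (PS) by Lemma \ref{palaissmale}, Theorem \ref{mountainpass} yields a critical point $v_\lambda \in W^{1,\Phi}_0(\Omega)$ of $J_\lambda$ with $J_\lambda(v_\lambda) = c \ge \eta_\rho > J_\lambda(u_\lambda)$ (in the non-degenerate case), so $v_\lambda \ne u_\lambda$; in the degenerate case $c = \eta_\rho$, the critical point lies on $\partial B(u_\lambda,\rho)$ and hence again $v_\lambda \ne u_\lambda$. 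Both $u_\lambda$ and $v_\lambda$ are critical points of $J_\lambda$, hence weak solutions of \eqref{lambdaauxprob}, hence — by Lemma \ref{equivalence} applied to the rescaled problem (the sub-solution construction of Lemma \ref{subsollemma} carries over to \eqref{lambdaprob}, as noted at the start of Section 4) — weak solutions of \eqref{lambdaprob}. Finally, Theorem \ref{holder} upgrades both to $C^{1,\tau}_0(\overline{\Omega})$.

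\textbf{Main obstacle.} The delicate point is Step 1: turning ``local minimizer'' into the genuine strict mountain pass separation $J_\lambda(u_\lambda) < \eta_\rho$ for a suitable $\rho$. A local minimizer only gives $\le$, and one must either invoke the standard dichotomy (if $u_\lambda$ is not a strict local minimizer then there is a second critical point at the same level in every neighbourhood, which settles multiplicity directly) or use the quantitative information from Remark \ref{locmin} that $u_\lambda$ minimizes $J_\lambda$ over the whole ball $\overline{B}_\lambda$, not just a tiny neighbourhood, to extract strictness. I would also need to double-check that the rescaled sub-solution and the truncation $\hat f$ (hence $K$, $J_\lambda$, and the equivalence Lemma \ref{equivalence}) are set up for the parametric problem \eqref{lambdaprob} exactly as in Section 2 for \eqref{prob}; this is routine but must be stated. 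The (PS) condition and $C^1$ regularity of $J_\lambda$ are already in hand, so no further work is needed there.
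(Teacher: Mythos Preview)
Your proposal is correct and follows essentially the same route as the paper: local minimizer $u_\lambda$ from Theorem \ref{firstsol} (via Remark \ref{locmin}) as base point, unboundedness from below to find the far endpoint, (PS) from Lemma \ref{palaissmale}, and the dichotomy ``strict mountain separation vs.\ critical point on the sphere'' to guarantee $v_\lambda\neq u_\lambda$, with regularity from Theorem \ref{holder}. The paper packages your Step~1 by invoking \cite[Theorem 2.1]{Bo2} (after first recording that $J_\lambda$ is bounded on bounded sets, a small technical point you may want to add), but the substance of the argument is the same as what you wrote.
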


\begin{proof}
Let $\lambda^*,r^*$ be given by Theorem \ref{firstsol}. Fix any $\lambda \in (0,\lambda^*)$. Existence of a solution $u_\lambda \in C^{1,\tau}_0(\Omega)$ to \eqref{lambdaprob} is guaranteed by Theorem \ref{firstsol}. We want to get a second solution $v_\lambda \in C^{1,\tau}_0(\Omega)$ by applying Theorem \ref{mountainpass} to the functional $J_\lambda$ defined in \eqref{Jlambda}. As in the proof of Theorem \ref{firstsol}, regularity of $v_\lambda$ is a consequence of Theorem \ref{holder}.

First we notice that $J_\lambda$ is bounded on bounded sets: indeed, by \eqref{comparison}, \eqref{boundest}, and the embedding inequality for $W^{1,\Phi}_0(\Omega) \hookrightarrow L^{\Upsilon}(\Omega)$ we have, for an opportune $C_3>0$ independent of $u$,
\begin{equation*}
\begin{split}
|J_\lambda(u)| &\leq \overline{\zeta}_\Phi(\|u\|_{W^{1,\Phi}_0(\Omega)}) + C_1|\Omega| + C_2 \overline{\zeta}_\Upsilon(\|u\|_{L^\Upsilon(\Omega)}) \\
&\leq \overline{\zeta}_\Phi(\|u\|_{W^{1,\Phi}_0(\Omega)}) + C_1|\Omega| + C_3 \overline{\zeta}_\Upsilon(\|u\|_{W^{1,\Phi}_0(\Omega)}).
\end{split}
\end{equation*}
Taking into account Remark \ref{locmin}, we have that $u_\lambda$ is a local minimizer for $J_\lambda$. Since Theorem \ref{palaissmale} ensures that $J_\lambda$ is unbounded from below, then $u_\lambda$ is not a global minimizer. Reasoning as in the first part of the proof of \cite[Theorem 2.1]{Bo2} guarantees \eqref{mpgeometry} with $u_0:=u_\lambda$. Hence Theorem \ref{mountainpass} furnishes $v_\lambda \in W^{1,\Phi}_0(\Omega)$ critical point to $J_\lambda$, and thus solution to both \eqref{lambdaauxprob} and \eqref{lambdaprob}. Moreover $v_\lambda$ fulfills $J_\lambda(v_\lambda) \geq J_\lambda(u_\lambda)$. If $J_\lambda(v_\lambda) > J_\lambda(u_\lambda)$, then $v_\lambda \neq u_\lambda$; else, Theorem \ref{mountainpass} ensures that $v_\lambda$ can be taken on $\partial B_\lambda$. In any case we have $v_\lambda \neq u_\lambda$.
\end{proof}

\appendix

\section{Examples}

In this appendix we want to show the importance of working in Sobolev-Orlicz spaces instead of classical Sobolev ones. In this sight, we furnish a class of Young functions $\Phi$ and reaction terms $f$ whose corresponding problem \eqref{lambdaprob} cannot be set in a Sobolev framework, but it fulfills ${\rm H(a)_1}$--${\rm H(a)_2}$ and ${\rm H(f)_1}$--${\rm H(f)_3}$; see Example \ref{ex1} below. Inspiring examples can be found, e.g., in \cite{CGHMS}, where existence of at least one positive solution is obtained by the Mountain Pass theorem provided $\lambda=1$ and the reaction term is not affected by singular terms. Following \cite[Example 1]{CGHMS}, at the end of this appendix we will furnish a more concrete example (vide Example \ref{ex2} below) satisfying our hypotheses.

First of all, we construct a class of `pathological' Young functions $\Psi$ (with $1<i_\Psi<s_\Psi<+\infty$), possessing distinct indices at infinity, that is,
\begin{equation*}
\liminf_{t\to+\infty} \frac{t\Psi'(t)}{\Psi(t)} \neq \limsup_{t\to+\infty} \frac{t\Psi'(t)}{\Psi(t)}, \quad \liminf_{t\to+\infty} \frac{t\Psi''(t)}{\Psi'(t)} \neq \limsup_{t\to+\infty} \frac{t\Psi''(t)}{\Psi'(t)}.
\end{equation*}
This could be hopefully useful also in other contexts to construct counterexamples in Orlicz spaces.

\begin{lemma}
\label{example}
Let $1<q<p<+\infty$. Set $\alpha := \frac{p+q}{2}$ and $\beta := \frac{p-q}{2}$. Then, for any $\epsilon<\min\{4,\frac{q-1}{\beta}\}$, the function
\begin{equation}
\label{young}
\Psi(t) := t^\alpha e^{\eta(t)} \quad \forall t \geq 0,
\end{equation}
with
\begin{equation}
\label{young2}
\eta(t) = \left\{
\begin{alignedat}{2}
&\frac{\beta \epsilon}{2e^2} (e-t)^2 - \frac{\beta\epsilon}{1+\epsilon^2} \quad &&\mbox{for} \;\; 0 \leq t \leq e, \\
&\beta \frac{\log t}{1+\epsilon^2} [\sin(\epsilon \log (\log t))-\epsilon \cos(\epsilon \log (\log t))] \quad &&\mbox{for} \;\; t \geq e,
\end{alignedat}
\right. 
\end{equation}
is a Young function satisfying the following properties:
\begin{equation}
\label{index1}
\inf_{t>0} \frac{t\Psi'(t)}{\Psi(t)} = \liminf_{t\to+\infty} \frac{t\Psi'(t)}{\Psi(t)} = q < p = \limsup_{t\to+\infty} \frac{t\Psi'(t)}{\Psi(t)} = \sup_{t>0} \frac{t\Psi'(t)}{\Psi(t)},
\end{equation}
\begin{equation}
\label{index2}
\begin{split}
&q-1-\beta\epsilon < \inf_{t>0} \frac{t\Psi''(t)}{\Psi'(t)} \leq \liminf_{t\to+\infty} \frac{t\Psi''(t)}{\Psi'(t)} = q-1 \\
&< p-1 = \limsup_{t\to+\infty} \frac{t\Psi''(t)}{\Psi'(t)} \leq \sup_{t>0} \frac{t\Psi''(t)}{\Psi'(t)} < p-1+\beta\epsilon,
\end{split}
\end{equation}
\begin{equation}
\label{notpower}
\liminf_{t \to +\infty} \frac{\Psi(t)}{t^r} = 0 \quad \mbox{or} \quad \limsup_{t \to +\infty} \frac{\Psi(t)}{t^r} =+\infty \quad \forall r>1.
\end{equation}
If $p<N$ then $\Psi$ satisfies \eqref{plessN} with $\Psi$ in place of $\Phi$. If, in addition, $p<q^*$, then $s_\Psi<i_{\Psi_*}$.
\end{lemma}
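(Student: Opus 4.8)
The whole argument rests on explicit formulas for the logarithmic derivatives of $\Psi$, so I would start there. From \eqref{young} we have $\log\Psi(t)=\alpha\log t+\eta(t)$, hence $\frac{t\Psi'(t)}{\Psi(t)}=\alpha+u(t)$ with $u(t):=t\eta'(t)$, and differentiating once more,
\[
\frac{t\Psi''(t)}{\Psi'(t)}=\bigl(\alpha+u(t)-1\bigr)+\frac{tu'(t)}{\alpha+u(t)}.
\]
The next step is to compute $u$ and $tu'$ on the two branches. On $(0,e]$ one reads from \eqref{young2} that $u(t)=-\frac{\beta\epsilon}{e^2}t(e-t)\in[-\frac{\beta\epsilon}{4},0]$ and $tu'(t)=-\frac{\beta\epsilon}{e^2}t(e-2t)\in[-\frac{\beta\epsilon}{8},\beta\epsilon]$. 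On $[e,+\infty)$, substituting $s=\log t$ and using the identity
\[
\frac{d}{ds}\left(\frac{s}{1+\epsilon^2}\bigl[\sin(\epsilon\log s)-\epsilon\cos(\epsilon\log s)\bigr]\right)=\sin(\epsilon\log s),
\]
which one checks by direct differentiation, gives the clean expressions $u(t)=\beta\sin(\epsilon\log\log t)$ and $tu'(t)=\frac{\beta\epsilon\cos(\epsilon\log\log t)}{\log t}$. I would then verify that at $t=e$ one has $\eta(e^-)=\eta(e^+)$, $\eta'(e^-)=\eta'(e^+)=0$ and $\eta''(e^-)=\eta''(e^+)=\frac{\beta\epsilon}{e^2}$, so that $\Psi\in C^2\bigl((0,+\infty)\bigr)$ and the branch formulas glue consistently.

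With these formulas, \eqref{index1} is immediate: on $[e,+\infty)$ the quantity $\frac{t\Psi'}{\Psi}=\alpha+\beta\sin(\epsilon\log\log t)$ sweeps all of $[\alpha-\beta,\alpha+\beta]=[q,p]$ and attains both endpoints (as $\epsilon\log\log t$ runs over $[0,+\infty)$), which fixes simultaneously the global extrema over $t>0$ and the $\liminf,\limsup$ at $+\infty$; on $(0,e]$, $\frac{t\Psi'}{\Psi}=\alpha+u(t)\in[\alpha-\frac{\beta\epsilon}{4},\alpha]$ lies strictly inside $(q,p)$ precisely because $\epsilon<4$, so this branch cannot alter the extrema. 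For \eqref{index2} I would bound $\frac{t\Psi''}{\Psi'}$ branchwise. On both branches $\alpha+u(t)\ge q>1$ (using $\epsilon<4$ on $(0,e]$) and $|tu'(t)|\le\beta\epsilon$, hence $\bigl|\frac{tu'(t)}{\alpha+u(t)}\bigr|\le\frac{\beta\epsilon}{q}<\beta\epsilon$; on $[e,+\infty)$ the sharper estimate $|tu'(t)|\le\beta\epsilon/\log t\to0$ together with the oscillation of $u$ yields $\liminf_{t\to+\infty}\frac{t\Psi''}{\Psi'}=q-1$ and $\limsup_{t\to+\infty}\frac{t\Psi''}{\Psi'}=p-1$. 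Assembling the branch bounds produces the strict inequalities in \eqref{index2}; the only one needing care is $\frac{\epsilon}{4}+\frac{\epsilon}{q}<1+\epsilon$ on $(0,e]$, which follows from $\epsilon<4$ and $q>1$. Since $\epsilon<\frac{q-1}{\beta}$ forces $\frac{t\Psi''}{\Psi'}>q-1-\beta\epsilon>0$ throughout, $\Psi$ is strictly convex; adding $\Psi(0)=0$, $\Psi'>0$, $\Psi(t)\sim e^{\eta(0)}t^\alpha$ as $t\to0^+$ (so $\Psi(t)/t\to0$ since $\alpha>1$) and $\Psi(t)\ge c\,t^{q}$ for $t$ large (so $\Psi(t)/t\to+\infty$), this confirms that $\Psi$ is a Young function.

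For \eqref{notpower} I would rewrite, for $t\ge e$,
\[
\log\frac{\Psi(t)}{t^{r}}=(\log t)\left[(\alpha-r)+\frac{\beta}{\sqrt{1+\epsilon^2}}\sin\bigl(\epsilon\log\log t-\arctan\epsilon\bigr)\right],
\]
using the amplitude–phase identity $\sin\theta-\epsilon\cos\theta=\sqrt{1+\epsilon^2}\,\sin(\theta-\arctan\epsilon)$. Since the sine attains $1$ and $-1$ along sequences $t\to+\infty$: if $\alpha-r+\frac{\beta}{\sqrt{1+\epsilon^2}}>0$ then $\Psi(t)/t^{r}\to+\infty$ along one such sequence, so $\limsup_{t\to+\infty}\Psi(t)/t^{r}=+\infty$; otherwise $\alpha-r-\frac{\beta}{\sqrt{1+\epsilon^2}}<0$ and $\Psi(t)/t^{r}\to0$ along the other, so $\liminf_{t\to+\infty}\Psi(t)/t^{r}=0$. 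Finally, the remaining assertions follow from $q\le\frac{t\Psi'}{\Psi}\le p$: integrating gives $\Psi(t)\le C\,t^{p}$ for $t$ large, hence $\Psi^{-1}(t)\ge c\,t^{1/p}$, so $\int_1^{+\infty}\Theta_\Psi(t)\dt\ge c\int^{+\infty}t^{\frac1p-1-\frac1N}\dt=+\infty$ when $p<N$, which is \eqref{plessN}; near the origin $\Psi(t)\sim e^{\eta(0)}t^\alpha$ with $\alpha<p<N$ gives $\int_0^1\Theta_\Psi<+\infty$, so $\Psi_*$ is well defined; and when in addition $p<q^{*}$, \eqref{sobind} (applicable since $s_\Psi=p<N$) yields $i_{\Psi_*}\ge i_\Psi^{*}=q^{*}>p=s_\Psi$.

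The main obstacle is purely computational: spotting and verifying the identity for $\frac{d}{ds}$ displayed above (the piecewise formula for $\eta$ has evidently been reverse-engineered from it), checking the $C^{2}$ matching at $t=e$, and carrying the error term $\frac{tu'}{\alpha+u}$ through the whole chain of inequalities in \eqref{index2} with enough slack — the two hypotheses $\epsilon<4$ and $\epsilon<\frac{q-1}{\beta}$ are exactly what makes all those inequalities, and convexity, strict.
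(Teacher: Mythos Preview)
Your proposal is correct and follows essentially the same route as the paper: compute $\frac{t\Psi'}{\Psi}=\alpha+t\eta'$ and $\frac{t\Psi''}{\Psi'}$ in terms of $\eta',\eta''$, evaluate the two branches (the paper derives $\eta'(t)=\beta\sin(\zeta(t))/t$ on $[e,+\infty)$ via the same integration-by-parts identity you state), bound the error term to obtain \eqref{index1}--\eqref{index2}, and then handle \eqref{notpower} and \eqref{plessN} by comparison with powers. Your use of the amplitude--phase form $\sin\theta-\epsilon\cos\theta=\sqrt{1+\epsilon^2}\sin(\theta-\arctan\epsilon)$ for \eqref{notpower} is a slightly slicker variant of the paper's choice of two explicit sequences $\{h_n\},\{k_n\}$, but the substance is the same.
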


\begin{proof}
Starting from \eqref{young}, let us compute $\Psi'$, $\Psi''$ in terms of the lower order derivatives:
\begin{equation}
\label{der1}
\Psi'(t) = \Psi(t) \left( \frac{\alpha}{t} + \eta'(t) \right) = \frac{\Psi(t)}{t}(\alpha+t\eta'(t)), 
\end{equation}
\begin{equation}
\label{der2}
\begin{split}
\Psi''(t) &= \Psi'(t) \left( \frac{\Psi'(t)}{\Psi(t)} + \frac{\eta''(t)-\frac{\alpha}{t^2}}{\frac{\alpha}{t} + \eta'(t)} \right) = \frac{\Psi'(t)}{t} \left( \alpha+t\eta'(t) + \frac{t^2\eta''(t)-\alpha}{\alpha+t\eta'(t)} \right) \\
&= \frac{\Psi'(t)}{t} \left( \alpha-1+t\eta'(t) + \frac{t^2\eta''(t)+t\eta'(t)}{\alpha+t\eta'(t)} \right).
\end{split}
\end{equation}
Firstly we study $\Psi$ in the interval $[0,e]$. We have
\begin{equation}
\label{etaprime1}
\eta'(t) = \frac{\beta\epsilon}{e^2}(t-e) \quad \mbox{and} \quad \eta''(t) = \frac{\beta\epsilon}{e^2} \quad \mbox{for all} \;\; t \in (0,e].
\end{equation}
We observe that \eqref{der1}, \eqref{young2}, and $\epsilon < 4$ entail
\begin{equation}
\label{0e1}
q < \alpha - \frac{\beta\epsilon}{4} = \alpha+\min_{s\in(0,e]} s\eta'(s) \leq \frac{t\Psi'(t)}{\Psi(t)} \leq \alpha+\max_{s \in (0,e]} s\eta'(s) = \alpha < p
\end{equation}
for all $t\in(0,e]$. Exploiting \eqref{der2} and \eqref{0e1}, $\epsilon<4$, the monotonicity of $r \mapsto r+\frac{r}{\alpha+r}$, and $\eta'<0<\eta''$ in $(0,e]$, we obtain, for all $t \in (0,e]$,
\begin{equation}
\label{0e2}
\begin{split}
\alpha-1-\beta\epsilon &< \alpha-1-\frac{\beta\epsilon}{4}\left( 1+\frac{1}{\alpha-\frac{\beta\epsilon}{4}} \right) \\
&= \alpha-1+\min_{s\in(0,e]} \left( s\eta'(s)+\frac{s\eta'(s)}{\alpha+s\eta'(s)} \right) \\
&\leq \frac{t\Psi''(t)}{\Psi'(t)} \leq \alpha-1+\frac{t^2\eta''(t)}{\alpha+t\eta'(t)} \\
&\leq \alpha-1+\frac{\beta\epsilon}{\alpha-\frac{\beta\epsilon}{4}} < \alpha-1+\beta\epsilon.
\end{split}
\end{equation}

Now we analyze $\Psi$ in $[e,+\infty)$. We posit $\zeta(t) := \epsilon(\log(\log t))$ for all $t \geq e$. Integrating by parts twice reveals that
\begin{equation*}
\begin{split}
\int \sin(\epsilon \log s) \ds &= s\sin(\epsilon \log s) - \epsilon \int \cos(\epsilon \log s) \ds \\
&= s[\sin(\epsilon \log s)-\epsilon\cos(\epsilon \log s)] - \epsilon^2 \int \sin(\epsilon \log s) \ds,
\end{split}
\end{equation*}
whence, performing the change of variable $s=\log t$ and recalling \eqref{young2},
\begin{equation}
\label{eta}
\begin{split}
\beta \int \frac{\sin(\zeta(t))}{t} \dt &= \beta\int \sin(\epsilon \log s) \ds = \frac{\beta s}{1+\epsilon^2}[\sin(\epsilon \log s)-\epsilon\cos(\epsilon \log s)] \\
&= \beta \frac{\log t}{1+\epsilon^2}[\sin(\epsilon \log(\log t))-\epsilon\cos(\epsilon \log(\log t))] = \eta(t)
\end{split}
\end{equation}
for all $t \in[e,+\infty)$. Accordingly, we have
\begin{equation}
\label{etaprime2}
\eta'(t) = \beta \frac{\sin(\zeta(t))}{t} \quad \mbox{and} \quad \eta''(t) = \frac{\beta}{t^2} [t\zeta'(t)\cos(\zeta(t))-\sin(\zeta(t))] \quad \mbox{for all} \;\; t \geq e.
\end{equation}
Hence we rewrite \eqref{der1}--\eqref{der2} as
\begin{equation}
\label{der1bis}
\Psi'(t) = \frac{\Psi(t)}{t} (\alpha+\beta\sin(\zeta(t))),
\end{equation}
\begin{equation}
\label{der2bis}
\Psi''(t) = \frac{\Psi'(t)}{t} \left( \alpha-1+\beta\sin(\zeta(t)) + \frac{\beta t\zeta'(t)\cos(\zeta(t))}{\alpha+\beta\sin(\zeta(t))} \right),
\end{equation}
valid for all $t \in [e,+\infty)$.

We observe that $\zeta(t) \to +\infty$ as $t \to +\infty$, so \eqref{0e1} and \eqref{der1bis} guarantee \eqref{index1}. Moreover, $t\zeta'(t) \to 0$ as $t \to +\infty$ and
\begin{equation}
\label{bounded}
0 \leq \frac{\beta |\cos(\zeta(t))|}{\alpha+\beta\sin(\zeta(t))} \leq \frac{\beta}{\alpha-\beta} \quad \forall t \in[e,+\infty).
\end{equation}
Thus, \eqref{der2bis} provides the equalities in \eqref{index2}. More precisely, we notice that
\begin{equation}
\label{bounded2}
0 < t\zeta'(t) = \frac{\epsilon}{\log t} \leq \epsilon \quad \forall t \in[e,+\infty).
\end{equation}
Exploiting \eqref{der2bis}--\eqref{bounded2} we get, for all $t \geq e$,
\begin{equation}
\label{convex}
q-1-\beta\epsilon < \alpha-1-\beta-\frac{\beta\epsilon}{\alpha-\beta} \leq \frac{t\Psi''(t)}{\Psi'(t)} \leq \alpha-1+\beta+\frac{\beta\epsilon}{\alpha-\beta} < p-1+\beta\epsilon.
\end{equation}
Because of \eqref{0e2} and \eqref{convex}, the inequalities in \eqref{index2} hold true.

A direct computation, based on \eqref{etaprime1} and \eqref{etaprime2}, shows that $\eta \in C^2(0,+\infty)$; thus, $\Psi$ enjoys the same property. Moreover, \eqref{index1}--\eqref{index2} and $\beta\epsilon<q-1$ yield $\Psi'(t),\Psi''(t)>0$ for all $t>0$. Thus $\Psi$ is strictly increasing and convex. Using again \eqref{index1}, together with \eqref{factor}, we deduce
\begin{equation*}
\Psi(1) \min\{t^p,t^q\} = \Psi(1) \underline{\zeta}_\Psi(t) \leq \Psi(t) \leq \Psi(1) \overline{\zeta}_\Psi(t) = \Psi(1) \max\{t^p,t^q\} \quad \forall t>0,
\end{equation*}
which entails \eqref{Nfunct}. Hence $\Psi$ is a Young function.

In order to prove \eqref{notpower}, let us consider two sequences $h_n,k_n \to +\infty$ such that
\begin{equation*}
\sin(\zeta(h_n)) = 1 \quad \mbox{and} \cos(\zeta(k_n)) = 1 \quad \mbox{for all} \;\; n \in \N.
\end{equation*}
Then \eqref{young}--\eqref{young2} give, for any $n$ large enough,
\begin{equation*}
\Psi(h_n) = h_n^{\alpha+\frac{\beta}{1+\epsilon^2}} \quad \mbox{and} \quad \Psi(k_n) = k_n^{\alpha-\frac{\beta\epsilon}{1+\epsilon^2}},
\end{equation*}
ensuring \eqref{notpower}.

Now suppose that $p<N$. Then, setting $\Lambda := t^p$, by \eqref{factor} we infer $\Psi<\Lambda$ in the sense of \eqref{Youngcomp}. In particular, $\Psi^{-1}(t) \geq c\Lambda^{-1}(t)$ for all $t>1$, being $c>0$ small enough. Thus we get
\begin{equation*}
\int_1^{+\infty} \Theta_{\Psi}(t) \dt \geq c\int_1^{+\infty} \Theta_\Lambda(t) \dt = c \int_1^{+\infty} t^{\frac{1}{p^*}-1} \dt = +\infty.
\end{equation*}
The last statement is a direct consequence of \eqref{index1} and \eqref{sobind}.
\end{proof}

\begin{rmk}
\label{settings}
Two motivations suggest to work in Sobolev-Orlicz spaces instead of in the classical Sobolev framework.

The first motivation is structural: if we set the problem in a reflexive Sobolev-Orlicz space $W^{1,\Psi}_0(\Omega)$ (which may be a Sobolev space), the weak formulation of problem \eqref{lambdaprob} requires
\begin{equation*}
\int_\Omega a(|\nabla u|) \nabla u \nabla v \dx < +\infty \quad \forall u,v \in W^{1,\Psi}_0(\Omega).
\end{equation*}
This is a \textit{duality} property, which fails whenever $W^{1,\Psi}_0(\Omega) \setminus W^{1,\Phi}_0(\Omega) \neq \emptyset$: indeed, taking $u \in W^{1,\Psi}_0(\Omega) \setminus W^{1,\Phi}_0(\Omega)$ and $v=u$, by \eqref{indices} we get
\begin{equation*}
\int_\Omega a(|\nabla u|)\nabla u \nabla v \dx = \int_\Omega \phi(|\nabla u|)|\nabla u| \dx \geq i_\Phi \int_\Omega \Phi(|\nabla u|) \dx = +\infty.
\end{equation*}
Hence, in order to properly define the concept of `weak solution', we have to require $W^{1,\Psi}_0(\Omega) \subseteq W^{1,\Phi}_0(\Omega)$, which means $\Phi<\Psi$ (in the sense of \eqref{Youngcomp}).

Here comes the second motivation, which is technical: if we suppose $W^{1,\Psi}_0(\Omega) \subsetneq W^{1,\Phi}_0(\Omega)$, then we loose the \textit{coercivity} of $-\Delta_\Phi$. To show this, we pick $u \in W^{1,\Phi}_0(\Omega) \setminus W^{1,\Psi}_0(\Omega)$ and a sequence $\{u_n\} \subseteq W^{1,\Psi}_0(\Omega)$ such that $u_n \to u$ in $W^{1,\Phi}_0(\Omega)$. It turns out that $\|u_n\|_{W^{1,\Psi}_0(\Omega)} \to +\infty$; otherwise, by reflexivity of $W^{1,\Psi}_0(\Omega)$ and up to subsequences, we would have $u_n \rightharpoonup u^*$ in $W^{1,\Psi}_0(\Omega)$ for some $u^* \in W^{1,\Psi}_0(\Omega)$ and, by uniqueness of the weak limit, we would conclude $u=u^* \in W^{1,\Psi}_0(\Omega)$, in contrast with the choice of $u$. On the other hand, by \eqref{indices},
\begin{equation*}
\begin{split}
&\sup_{n \in \N} \int_\Omega a(|\nabla u_n|) |\nabla u_n|^2 \dx = \sup_{n \in \N} \int_\Omega \phi(|\nabla u_n|)|\nabla u_n| \dx \\
&\leq s_\Phi \sup_{n \in \N} \int_\Omega \Phi(|\nabla u_n|) \dx \leq s_\Phi \sup_{n \in \N} \overline{\zeta}_\Phi(\|u_n\|_{W^{1,\Phi}_0(\Omega)}) <+\infty,
\end{split}
\end{equation*}
which proves that $-\Delta_\Phi$ is not coercive on $W^{1,\Psi}_0(\Omega)$. Since coercivity of the principal part is an essential ingredient for existence results and, in particular, for our approach, which relies on Theorem \ref{varprinc}, we adopted the framework $W^{1,\Psi}_0(\Omega) = W^{1,\Phi}_0(\Omega)$.

It remains to prove that $W^{1,\Phi}_0(\Omega)$ is not a Sobolev space in general. To this end, we observe that any Young function given by Lemma \ref{example}, say $\Phi$, furnishes a counterexample. Indeed, suppose by contradiction that $W^{1,\Phi}_0(\Omega) = W^{1,r}_0(\Omega)$ for some $r>1$. Then we have $\Phi<t^r$ and $t^r<\Phi$ (in the sense of \eqref{Youngcomp}), whence
\begin{equation}
\label{notpowerabs}
\begin{split}
\Phi<t^r &\Rightarrow \limsup_{t \to +\infty} \frac{\Phi(t)}{t^r} \leq c_1^r <+\infty,\\
t^r<\Phi &\Rightarrow \liminf_{t \to +\infty} \frac{\Phi(t)}{t^r} \geq c_2^{-r} >0,
\end{split}
\end{equation}
for a suitable $c_1,c_2>0$ given by \eqref{Youngcomp}. Since \eqref{notpowerabs} contradicts \eqref{notpower}, we deduce that $W^{1,\Phi}_0(\Omega)$ is not a Sobolev space.
\end{rmk}

\begin{rmk}
\label{reactrmk}
Another important aspect related to the choice of the Sobolev-Orlicz framework is represented by the reaction term: we address the reader to \cite[Section 6]{CGHMS} for a discussion about this setting and the Ambrosetti-Rabinowitz condition. Here we limit ourselves to provide an example of nonlinearity $f=f(u)$ fulfilling ${\rm H(f)_1}$--${\rm H(f)_3}$.

Suppose $s_\Phi<i_{\Phi_*}$. By virtue of Lemma \ref{example}, we can construct a Young function $\Upsilon$ satisfying $s_\Phi<i_\Upsilon<s_\Upsilon<i_{\Phi_*}$. Then, fixed $\gamma \in (0,1)$, we consider
\begin{equation}
\label{reaction}
f(t) = \frac{\Upsilon(t)}{t} + t^{-\gamma}.
\end{equation}
Obviously, $f$ fulfills ${\rm H(f)_1}$. Observe that \eqref{equivalent} implies
\begin{equation*}
\frac{\Upsilon(t)}{t} \leq \overline{\Upsilon}^{-1}(\Upsilon(t)) \quad \forall t>0,
\end{equation*}
so that ${\rm H(f)_2}$ is satisfied with $c_1=c_2=1$. To prove ${\rm H(f)_3}$, choose any $\mu \in (s_\Phi,i_\Upsilon)$. For all $t>0$ we have
\begin{equation}
\label{RHS}
tf(t) = \Upsilon(t)+t^{1-\gamma}
\end{equation}
and, given any $R>0$,
\begin{equation}
\label{LHS}
F(t) = \int_R^t \left(\frac{\Upsilon(s)}{s}+s^{-\gamma}\right) \ds \leq  \int_R^t (i_\Upsilon^{-1}\Upsilon'(s) + s^{-\gamma}) \ds \leq \frac{1}{i_\Upsilon} \Upsilon(t) + \frac{t^{1-\gamma}}{1-\gamma}.
\end{equation}
Convexity of $\Upsilon$ and $\mu<i_\Upsilon$ guarantee that there exists $R>0$ such that
\begin{equation}
\label{convexity}
\frac{\mu}{1-\gamma} t^{1-\gamma} \leq \left(1-\frac{\mu}{i_\Upsilon}\right) \Upsilon(t) \quad \forall t \geq R.
\end{equation}
From \eqref{RHS}--\eqref{convexity} we get
\begin{equation*}
\mu F(t) \leq \frac{\mu}{i_\Upsilon} \Upsilon(t) + \frac{\mu}{1-\gamma} t^{1-\gamma} \leq \Upsilon(t) \leq tf(t) \quad \forall t \geq R,
\end{equation*}
which entails ${\rm H(f)_3}$.
\end{rmk}

As announced, we conclude with two examples of problems fulfilling the hypotheses of Theorem \ref{secondsol}; according to Remark \ref{settings}, we stress that it is necessary to set them in the appropriate Sobolev-Orlicz setting. Existence of two solutions for these problems is a consequence of Theorem \ref{secondsol}.
\begin{ex}
\label{ex1}
Take any $r>s>p>q>1$ such that $p<N$ and $r<q^*$. Let $\Phi$ and $\Upsilon$ be given by Lemma \ref{example} (applied with any sufficiently small $\epsilon>0$), such that $i_\Phi=q$, $s_\Phi=p$, $i_\Upsilon=s$, $s_\Upsilon=r$. Let $f$ be defined as in \eqref{reaction}. Then problem \eqref{lambdaprob} admits at least two distinct weak solutions $u,v \in C^{1,\tau}_0(\overline{\Omega})$ for all $\lambda \in (0,\lambda^*)$. Here $\tau \in (0,1]$ and $\lambda^*>0$ are given by Theorems \ref{holder} and \ref{firstsol}, respectively. \\
The hypotheses of Theorem \ref{secondsol} are fulfilled: \eqref{index2} implies ${\rm H(a)_1}$ and the final part of Lemma \ref{example} gives ${\rm H(a)_2}$, while Remark \ref{reactrmk} ensures ${\rm H(f)_1}$--${\rm H(f)_3}$.
\end{ex}
\begin{ex}
\label{ex2}
The same result stated in Example \ref{ex1} holds true for the problem
\begin{equation}
\label{concrete}
\left\{
\begin{alignedat}{2}
-\Div(\log(1+|\nabla u|)|\nabla u|^{p-2}\nabla u) &= \lambda (u^r+u^{-\gamma}) \quad &&\mbox{in} \;\; \Omega, \\
u &> 0 \quad &&\mbox{in} \;\; \Omega, \\
u &= 0 \quad &&\mbox{on} \;\; \partial \Omega, 
\end{alignedat}
\right.
\end{equation}
where $1<p<N-1$, $N<p+p^2$, $r \in (p,p^*-1)$, and $\gamma \in (0,1)$. \\
Problem \eqref{concrete} comes from \eqref{lambdaprob} by choosing, for all $(x,t) \in \Omega \times (0,+\infty)$,
$$a(t):=t^{p-2}\log(1+t), \quad \Phi(t):=\int_{0}^{t}s^{p-1}\log(1+s)\ds, \quad f(x,t):=t^r+t^{-\gamma}.$$
In order to verify the assumptions of Theorem \ref{secondsol}, we explicitly observe that
\begin{itemize}
\item[$1.$] $H_a(t):=\frac{ta'(t)}{a(t)}=p-2+\frac{t}{(t+1)\log(t+1)}$ is a decreasing function in $(0,+\infty)$ with $\displaystyle \lim_{t\to 0^+}H_a(t)=p-1$ and $\displaystyle \lim_{t\to \infty}H_a(t)=p-2$. Then we have
\begin{equation}
\label{indici a}
i_a:=\inf_{t>0}H_a(t)=p-2<p-1=\sup_{t>0}H_a(t)=:s_a.
\end{equation}
%
\item[$2.$] According to De L'H\^opital's rule, $H_{\Phi}(t):=\frac{t\Phi'(t)}{\Phi(t)}$ fulfills
\begin{equation}
\label{nearzero}
\lim_{t\to 0^+}H_{\Phi}(t) = \lim_{t\to 0^+}\frac{ta(t)+t(ta(t))'}{ta(t)} = 2 + \lim_{t \to 0^+} H_a(t) = p+1
\end{equation}
and
\begin{equation}
\label{nearinfinity}
\lim_{t\to \infty}H_{\Phi}(t)=2 + \lim_{t \to \infty} H_a(t)=p.
\end{equation}
\item[$3.$] One has
\begin{equation}
\label{growthindices}
i_a+2\leq i_\Phi\leq s_\Phi\leq s_a+2.
\end{equation}
Indeed, for all $s>0$, we have $i_a\leq \frac{sa'(s)}{a(s)}\leq s_a$. Multiplying by $sa(s)$, an integration by parts in $(0,t)$ gives $i_a\Phi(t)\leq t^2a(t)-2\Phi(t)\leq s_a\Phi(t)$ and our claim follows.
\end{itemize}
From \eqref{indici a}--\eqref{growthindices}, it is readily seen that
$$p=i_a+2\leq i_\Phi\leq p \quad \mbox{and} \quad p+1 \leq s_\Phi\leq s_a+2=p+1,$$
that is,
\begin{equation}
\label{indici Phi}
i_\Phi=p<p+1=s_\Phi.
\end{equation}
Therefore, from \eqref{indici a}, it is clear that ${\rm H(a)_1}$ holds if and only if $p>1$. Bearing in mind \eqref{sobind}, since we have that $s_\Phi=p+1<p^*=i^*_\Phi\leq i_{\Phi_*}$, also ${\rm H(a)_2}$ is verified. On the other hand, ${\rm H(f)_1}$--${\rm H(f)_3}$ follow from Remark \ref{reactrmk} by taking $\Upsilon(t)=t^{r+1}$, being $i_{\Upsilon}=s_{\Upsilon}=r+1$ with $p<r< p^*-1$.
\end{ex}

\begin{rmk}
\label{noregularity}
Regarding Example \ref{ex2}, if we drop the condition $N<p+p^2$ and replace $r \in(p,p^*-1)$ with jointly $p<r$ and $t^r \ll \Phi_*$, we can ensure only that problem \eqref{concrete} admits at least two distinct weak solutions in $W^{1,\Phi}_0(\Omega)$. In particular, two solutions are obtained in the case $p<r\leq p^*-1$, since $t^p \ll \Phi$ forces $t^{r+1} < t^{p^*} \ll \Phi_*$ (with an argument similar to the one in the last part of the proof of Lemma \ref{example}). A similar conclusion holds true for Example \ref{ex1}, replacing $r<q^*$ with $t^r \ll \Phi_*$.
\end{rmk}

\section*{Acknowledgments}

\noindent
The authors wish to thank Prof.~Sunra Mosconi for fruitful discussions about some topics of the present research. \\
The authors are supported by PRIN 2017 `Nonlinear Differential Problems via Variational, Topological and Set-valued Methods' (Grant No. 2017AYM8XW) of MIUR. The second author is also supported by: GNAMPA-INdAM Project CUP$\underline{\phantom{x}}$E55F22000270001; grant `PIACERI 20-22 Linea 3' of the University of Catania. The third author is also supported by the grant `FFR 2021 Roberto Livrea'.

\begin{small}

\end{small}

\end{document}